\newtheorem{theorem}{Theorem}%
\begin{document}

\title{A reformulation-enumeration MINLP algorithm for gas network design}

\author{Yijiang Li\thanks{School of Industrial and Systems Engineering, Georgia Institute of Technology (yijiangli@gatech.edu)}, Santanu S. Dey\thanks{School of Industrial and Systems Engineering, Georgia Institute of Technology (santanu.dey@isye.gatech.edu)} and Nikolaos V. Sahinidis\thanks{School of Industrial and Systems Engineering and School of Chemical and Biomolecular Engineering, Georgia Institute of Technology (nikos@gatech.edu)}}
\date{}

\maketitle

\abstract{Gas networks are used to transport natural gas, which is an important resource for both residential and industrial customers throughout the world. The gas network design problem is generally modelled as a nonconvex mixed-integer nonlinear integer programming problem (MINLP). The challenges of solving the resulting MINLP arise due to the nonlinearity and nonconvexity. In this paper, we propose a framework to study the ``design variant'' of the problem in which the variables are the diameter choices of the pipes, the flows, the potentials, and the states of various network components. We utilize a nested loop that includes a two-stage procedure that involves a convex reformulation of the original problem in the inner loop and an efficient enumeration scheme in the outer loop. We conduct experiments on benchmark networks to validate and analyze the performance of our framework.}

\section{Introduction} \label{sec:intro}

Natural gas is a very important and common resource for both residential and industrial customers around the world. In the United States alone, a total of $27.7$ trillion cubic feet of natural gas were delivered to $77.3$ million customers in 2020 (\cite{eia_2020_demand}). To transport natural gas to meet this demand, a natural gas transportation system has been developed which was worth \$$187.9$ billion in 2020 (\cite{asset_2020}). A gas transportation system is usually modeled as a directed graph where the nodes can be customers with demands, manufacturers with supplies, or in-nodes that do not have either demands or supplies, while the arcs represent various system components. Modeling and optimizing gas transportation systems is very challenging due to the complex nature of the physical principles governing the operations of the system components. Generally, these models involve nonlinear and nonconvex constraints. Even simple models of the system components lead to challenging problems, as the scale of realistic instances is quite large compared to what state-of-the-art solvers can tackle.

In general, the system components (the arcs) include pipes, short pipes, resistors, compressors, valves, and control valves. There are additional components such as filters and measuring devices. We omit these additional components and consider the most common ones. Pipes constitute the majority of the system components. Control valves are sometimes referred to as regulators as well. Each of the system components serves a different role. The components can be grouped into passive components and active components. Pipes, short pipes, and resistors are passive system components and do not have on and off states. Compressors, control valves, and valves are active system components with on and off states. There are several types of gas network optimization problems. Most problems involve the decision on the flowrates in the arcs and the pressures at the nodes. Alternatively, it is sometimes convenient to consider node potentials, which are defined as the squared pressures. Commonly used benchmarking instances are based on the Belgian network (\cite{dewolf1996} and~\cite{dewolf2000}) of size up to 23 nodes and the GasLib networks of various sizes up to 4197 nodes (\cite{schmidt2015}). 

 In this paper, we study the {\em design problem}, which considers a given set of pipe locations. The main decisions involve choosing the pipe diameters, the states for valves, compressors, and control valves, and flowrates and potentials to transport gas to satisfy the given demand and supply scenarios while minimizing the network construction costs. We call this version of design problem the {\em design-from-scratch variant}, different from the reinforcement version. We do not include any operational costs, such as the operational cost of the compressors. In the design-from-scratch variant, we assume there are no existing pipes in the network. More details are provided in Section~\ref{sec:literature}. The demand and supply scenarios are commonly referred to as {\em nominations}. 
 
 The main contributions of this paper are the following. We propose a decomposition framework that involves an iterative procedure of solving a convex integer master problem and a verification subproblem for the solutions obtained from the master problem, and a binary search to minimize the construction cost of the pipes. We use the GasLib networks to validate our framework. Our framework is able to effectively solve the design problem on GasLib-582 network, which contains 582 nodes. To the best of our knowledge, this is the first paper that solves the gas network design problem on such large-scale instances. Previous literature (reviewed below in Section~\ref{sec:literature}) on the design-from-scratch version of the problem has not considered any instance with over 500 nodes, and these works do not simultaneously consider active elements, discrete diameter choices, and general (non-tree) underlying networks.
 

The structure of the remainder of this paper is as follows. We give a summary of the relevant literature in Section~\ref{sec:literature}. Section~\ref{sec:origin_formulation} presents the technical background and a compact formulation for the design problem while Section~\ref{sec:decomposition_framework} presents the decomposition framework. Implementation considerations and numerical experiments to validate the decomposition framework are presented in Section~\ref{sec:numerical_experiments}. Lastly, in Section~\ref{sec:conclusion}, we present concluding remarks and future research directions.

\section{Literature review} \label{sec:literature}
Gas network systems have been an important topic of study in the past several decades. As the relevant literature is rather extensive, here we review only works that are most closely related to ours. For a detailed review of the literature, we refer the interested readers to~\cite{rio-Mercado2015} and~\cite{zheng2010}. In addition,~\cite{Hante_2023} provides an overview on the modeling and common solution approaches in gas network systems. 

Among the types of problems studied, we focus below on two relevant problem types, the nomination validation problem and design problem. In the {\em nomination validation problem}, we assume a nomination is given. In the case where active system components are not considered, the problem aims to evaluate whether the existing network topology is feasible with respect to the given nomination. In the case where active system components are involved, the problem aims to determine whether there exist feasible configurations for the active system components along with rest of the components such that the resulting network is feasible with respect to the nomination. Work in this area includes~\cite{liers2020},~\cite{pfetsch2015},~\cite{geibler2015},~\cite{geibler2017},~\cite{dewolf2000},~\cite{burlacu2017},~\cite{koch2015},~\cite{rose2016}, and~\cite{schmidt2016}. In particular,~\cite{liers2020} presents a convexification scheme to find the convex hull of a ``Y'' junction in the network to deal with the nonconvex constraints arising from the governing physical principles. The paper~\cite{pfetsch2015} presents four approaches for solving nomination validation problems. The first approach is a piecewise linear approximation scheme that utilizes the generalized incremental model to linearize the nonlinear constraints; the approximation is improved iteratively by adding more linearization points. The second approach is a spatial branch-and-bound algorithm, which iteratively partitions the feasible region and refines the estimations and relaxations of the original problem in each partition to obtain dual bounds on the solution. The next approach in~\cite{pfetsch2015} is called RedNLP, a two-stage procedure, in which heuristics and reformulations are employed to find promising configurations of the active system components and the feasibility of configurations is checked in the second stage. The last approach considered is called the smoothing procedure, commonly used for mathematical programs with equilibrium constraints, to model the discrete decisions corresponding to the configurations of the active system components with continuous variables. Numerical experiments and comparisons across the four approaches are performed on the GasLib-582 network. Overall, the spatial branch-and-bound outperforms the other three approaches. The papers by~\cite{geibler2015} and~\cite{geibler2017} consider additional constraints of satisfying heat-power demand and supply in the nomination validation problem. In both works, an alternating direction method is applied to a linearized approximation model and numerical experiments are performed on the GasLib-4197 network. 

The design problem can be divided into the reinforcement problem and the design-from-scratch problem. In the {\em reinforcement problem}, it is assumed that an existing topology is given. The problem considers the options to install additional system components, mostly pipes and compressors, to satisfy a given nomination while minimizing the construction costs of the new system components. The cost of a new pipe is usually a function of its diameter and, as a result, diameter selection becomes a decision for the pipes. The {\em design-from-scratch problem} assumes no existing pipes in the network and makes decisions on the diameters for all pipes. In both the reinforcement problem and the design-from-scratch problem, the diameter choices can be continuous or discrete. Recent works that study the reinforcement problem include  ~\cite{babonneau2012},~\cite{borraz-Sanchez2015},~\cite{zhang1996},~\cite{dewolf1996},~\cite{shiono2016}, and~\cite{robinius2019}. In particular,~\cite{babonneau2012} considers the reinforcement problem with continuous diameters and utilizes a two-stage formulation. The first-stage problem is a convex nonlinear program to compute favorable diameter choices and flowrates, while the second-stage problem checks whether the first-stage solution is feasible with respect to the nomination by solving for the potential at each node. This convex program was formally introduced in~\cite{cherry1951} and adapted to solve a network problem in~\cite{collins197778}. The numerical experiments were performed on the Belgian network. The work in~\cite{borraz-Sanchez2015} considers also a reinforcement problem with discrete diameters. To deal with the nonconvexity from the governing physical principles, the paper considers reformulations and a convex relaxation which is second-order cone representable. These authors also utilize perspective strengthening that is studied in~\cite{frangioni2006} and~\cite{oktay2009} to enhance the relaxation. Numerical experiments were performed on both Belgian and GasLib networks. For the design-from-scratch problem,~\cite{zhang1996} considers discrete diameter choices without any active components. A bilevel formulation is proposed and in solving the formulation, the discrete variables corresponding to the discrete diameter choices are first transformed to continuous variables. Subsequently, the lower-level problem is reformulated via conjugate duality while a trust region algorithm is developed for the upper-level problem. Numerical experiments were performed using two networks of size up to 14 nodes. The paper by~\cite{dewolf1996} studies a variant of the design-from-scratch problem with continuous diameter choices and solve the model by a bundle method with a generalized gradient. Numerical experiments are performed with the Belgian network. Both~\cite{shiono2016} and~\cite{robinius2019} consider the problem on a tree-shaped network. \cite{robinius2019} consider continuous diameter and ``approximate discrete diameter'' obtained from the optimal continuous diameter. These authors develop an iterative procedure to solve the problem; their approach contracts the tree (network) converting the original tree into a single equivalent arc. Numerical illustrations of this procedure were performed on networks of size up to 36 nodes. In addition to solving the design problem,~\cite{robinius2019} proposes a tractable framework to consider infinitely many demand scenarios such that the diameter choices are feasible for all demand scenarios on a tree-shaped network.

As water flow is governed by similar physical equations as gas flow does due to their fluid nature, the water network design problem shares common characteristics as the gas network design problem. We refer interested readers to an overview of water network design problem in~\cite{dambrosio2015} and some modeling and solution techniques in~\cite{bragalli2008},~\cite{bragalli2012}, and~\cite{10.1007/11841036_62}. 

\section{Problem description} \label{sec:origin_formulation}
\subsection{Technical background}
In this section, we provide necessary technical background on gas networks that we will need for our formulations later. For more details, we refer the interested readers to~\cite{humpola2014},~\cite{pfetsch2015},~\cite{schmidt2015}, and~\cite{Hante_2023}. For the remainder of this paper, we use a directed graph $G = (\mathcal{V}, \mathcal{A})$ to represent a gas network, where each arc $a \in \mathcal{A}$ represents a system component of the network and each node $v \in \mathcal{V}$ can be a customer, a manufacturer, or an in-node. For each node $v \in \mathcal{V}$ we track its pressure $p_v$ and potential $\pi_v$, where the pressure and potential are related by the equation:
\begin{eqnarray}\label{eq:pressurepotential}
\pi_v = p_v^2.
\end{eqnarray}
We denote lower and upper bounds on the potential at a node $v$ by $\pi_v^{\text{min}}$ and $\pi_v^{\text{max}}$, respectively.

\subsubsection{Pipes}
As mentioned earlier, the majority of the arcs in gas networks are pipes. A pipe $a = (v,w)$ is specified by its length $l$, diameter $D$, and material properties. The flowrate in arc $a$, denoted as $q_a$, is upper bounded by a value $q_a^\text{max}$ which is determined by the cross-sectional area, $A := \pi D^2/4$, and material properties. We assume a linear relation between the value of $q_a^\text{max}$ and the cross-sectional area, i.e.,
\begin{equation}
q_a^\text{max} \propto A \label{q_max_A}.	
\end{equation}

The gas flow in pipe $a = (v,w)$ is described by a set of partial differential equations derived from conservation of mass and conservation of momentum which, under certain assumptions, can be simplified to  
\begin{equation}
   \pi_v - \pi_w =  p_v^2 - p_w^2 = \alpha_a \vert q_a \vert q_a, \label{pipe_physics}
\end{equation}
where $\alpha_a$ is the pressure loss coefficient. The pressure loss coefficient, $\alpha_a$, depends on the material and diameter of the pipe and a few properties of natural gas. 
As pipes allow bi-directional flow, the sign of the potential drop depends on the direction of the flow resulting in the absolute value of the flowrate variable $q_a$ in the equation. 

\subsubsection{Short pipes}
Short pipes are used for modeling purposes to handle complicated contract situations and are modeled as lossless pipes, i.e., a short pipe $a$ is a regular pipe with $\alpha_a = 0$.

\subsubsection{Resistors}
Resistors are commonly used to model pressure or potential drop. In this work, we assume resistors behave in the same way as pipes in terms of potential drop. We refer to~\cite{pfetsch2015} for alternative ways to model resistors.

\subsubsection{Compressors}
Compressors are used to increase the potential along an arc. There are many models proposed for compressors. In this paper, we adopt the model used in~\cite{babonneau2012}. For a compressor $a = (v,w)$, we use a binary variable $z_a$ to indicate the on and off states where $z_a = 1$ indicates the compressor is on and $z_a = 0$ otherwise. When the compressor is on, it allows flow from $v$ to $w$ and increases the potential from $v$ to $w$. When it is off, it does not allow any flow. As a result, we have the following relations for a compressor:
\begin{align}
& \pi_v - \pi_w \le 0, \; q_a \ge 0,\; \text{if} \; z_a = 1,\\
& q_a = 0, \; \text{if} \; z_a = 0.	
\end{align}
Additionally, there are limits on potential ratio as follows:
\begin{equation}
    \kappa_a^\text{min} \pi_v \le \pi_w \le \kappa_a^\text{max} \pi_v,
\end{equation}
where $\kappa_a^\text{min} = 1$ and $\kappa_a^\text{max} \ge 1$ are typical for a compressor (see~\cite{borraz-Sanchez2015}). Furthermore, when a compressor $a = (v,w)$ is on, it can impose additional bounds on the potentials at nodes $v$ and $w$. We denote those bounds by $(\pi_v^\text{min})^\prime$ and $(\pi_w^{\text{max}})^\prime$.

\subsubsection{Valves}
Valves are incorporated in the network to join or separate two nodes. Valves have the states of being open and closed. They allow bi-directional flow when they are open. A binary variable $z_a$ is used to model the states of valves. For a valve $a = (v,w)$, $z_a = 1$ indicates the valve is open and $z_a = 0$ otherwise. When a valve is open, the potentials at the two end nodes have to be equal. When a valve is closed, it does not allow any flow. Formally, the constraints for a valve $a = (v,w)$ are expressed as follows:
\begin{align}
	& \pi_v = \pi_w, \; q_a \text{ arbitrary}, \; \text{if} \; z_a = 1,\\
	& q_a = 0, \; \pi_v, \pi_w \text{ arbitrary}, \; \text{if} \; z_a = 0.
\end{align}

\subsubsection{Control valves}
Contrary to a compressor, the presence of a control valve in the network results in potential relief. We adopt a similar model that is used for compressors. For a control valve $a = (v,w)$, a binary variable $z_a$ is used to indicate its states. When it is on, it allows flow from $v$ to $w$ and causes a potential relief from $v$ to $w$. When it is off, it does not allow any flow. We have the following model for the control valve:
\begin{align}
& \pi_v - \pi_w \ge 0,\; q_a \ge 0, \; \text{if} \; z_a = 1,\\
& q_a = 0, \; \text{if} \; z_a = 0.
\end{align}
The limits on the potential relief are given by
\begin{equation}
	\kappa_a^\text{min} \pi_v \le \pi_w \le \kappa_a^\text{max} \pi_v,
\end{equation}
where $\kappa_a^\text{min} > 0$ and $\kappa_a^\text{max} \le 1$ are typical for a control valve (see~\cite{borraz-Sanchez2015}). A control valve $(v,w)$ can impose additional bounds on the potentials at nodes $v$ and $w$ when it is on. Similar to compressors, we denote those bounds by $(\pi_v^\text{min})^\prime$ and $(\pi_w^{\text{max}})^\prime$.

\subsection{Design problem}
We consider the design of a gas network for a given set of pipe locations (arcs), whose diameters we must decide. As discussed in the previous section, we have different system components and thus we divide the arc set $\mathcal{A}$ into $\mathcal{A} = A_p \cup A_{sp} \cup A_r \cup A_{cp} \cup A_v \cup A_{cv}$ where $A_p$, $A_{sp}$, $A_r$, $A_{cp}$, $A_v$, and $A_{cv}$ are the set of pipes, short pipes, resistors, compressors, valves, and control valves, respectively. We consider discrete diameter choices in our setting and denote the diameter choices by the set $[n] := \{1,2,\ldots, n\}$. We use binary variables $z_{a,i}$, $a \in A_p$ and $i \in [n]$, to model the discrete diameter choices of the pipes. We further denote the length and diameter of the pipe $a \in A_p$ with the diameter choice $i \in [n]$ by $l_a$ and $D_{a,i}$, respectively, and we use the same cost function, $f_{a,i}$, that is used in~\cite{babonneau2012} and~\cite{borraz-Sanchez2015}, namely
\begin{equation}
    f_{a,i} = l_a(1.04081^{-6}D_{a,i}^{2.5} + 11.2155).
\end{equation}

The cost function, $f_{a,i}$, can be computed before the execution of the design problem and thus considered given. Additionally, there is a trade-off in the selection of the diameters. A larger diameter, on one hand, leads to a smaller potential drop coefficient and a higher maximum flowrate, while, on the other hand, leads to a larger cost $f_{a,i}$. We introduce a flow direction variable $x_a^\text{dir} \in \{0,1\}$ for $a \in A_p \cup A_{sp} \cup A_r \cup A_v$ to account for the bidirectional flow. Recall that compressors and control valves only allow one flow direction. For a pipe $a \in A_p$, as a result of the multiple diameter choices, we have multiple flowrate variables $q_{a,i}$ for each $i \in [n]$. We decompose the flow into positive flow and negative flow, i.e., for $a \in A_p$, $q_{a,i} = q_{a,i}^+ - q_{a,i}^-$ and for $a \in A_{sp} \cup A_r \cup A_v$, $q_a = q_a^+ - q_a^-$. The maximum flowrate limit $q_a^\text{max}$ can be defined individually for each diameter choice $i$ as $q_{a,i}^\text{max}$ by relation (\ref{q_max_A}). Similarly, the potential drop coefficients $\alpha_{a,i}$ can be computed for each diameter choice. Furthermore, for a node $v \in \mathcal{V}$, we denote the set of incoming arcs and outgoing arcs by $A_\text{in}(v)$ and $A_\text{out}(v)$, respectively, i.e., $A_\text{in}(v) = \{a \in \mathcal{A} \vert a=(w,v)\}$ and $A_\text{out}(v) = \{a\in \mathcal{A} \vert a=(v,w)\}$. We use $d_v$ to denote the demand or supply at a node $v \in \mathcal{V}$.

With this notation and technical background, we give a MINLP formulation to the design problem:
{\small
\begin{align}
	\min_{z, q^+, q^-, \pi, x^\text{dir}} \; & \sum_{a \in A_p}\sum_{i \in [n]} f_{a,i} z_{a,i}, \label{compact_obj}\\
	\text{s.t.}\; & \sum_{a \in A_\text{in}(v) \backslash A_p} q_a^+ - \sum_{a \in A_\text{in}(v) \backslash A_p \cup A_{cp} \cup A_{cv}} q_a^- \notag \\
    & - \left(\sum_{a \in A_\text{out}(v) \backslash A_p} q_a^+ - \sum_{a \in A_\text{out}(v) \backslash A_p \cup A_{cp} \cup A_{cv}} q_a^- \right) \notag \\ 
	& + \sum_{i \in [n]} \sum_{a \in A_\text{in}(v) \cap A_p} (q_{a,i}^+ - q_{a,i}^-) - \sum_{i \in [n]}\sum_{a \in A_\text{out}(v) \cap A_p} (q_{a,i}^+ - q_{a,i}^-) = d_v, \qquad v \in \mathcal{V}, \label{compact_constr_flow_conservation}\\
	& \pi_v^{\text{min}} \le \pi_v \le \pi_v^{\text{max}}, \qquad v \in \mathcal{V}, \label{compact_constr_potential_bound}\\
	& x_a^{\text{dir}} \in \{0,1\}, \qquad a \in A_p \cup A_{sp} \cup A_v \cup A_r, \label{compact_constr_direction}\\
	& 0 \le q_{a,i}^-,\; q_{a,i}^+ \le q_{a,i}^{\text{max}}z_{a,i}, \qquad i \in [n], a \in A_p,\label{compact_constr_pipe_1}\\
	& \pi_v - \pi_w = \sum_{i \in [n]} \alpha_{a,i} (q_{a,i}^+)^2 - \sum_{i \in [n]} \alpha_{a,i} (q_{a,i}^-)^2, \qquad a \in A_p, \label{compact_constr_pipe_2}\\
	& 0 \le q_{a,i}^+ \le q_{a,i}^{\text{max}} x_a^{\text{dir}}, \qquad a \in A_p, i \in [n],\label{compact_constr_pipe_3}\\
	& 0 \le q_{a,i}^- \le q_{a,i}^{\text{max}} (1 - x_a^{\text{dir}}), \qquad a \in A_p, i \in [n],\label{compact_constr_pipe_4}\\
	& \sum_{i \in [n]} z_{a,i} = 1, \qquad a \in A_p, \label{compact_constr_pipe_5}\\
	& \pi_v = \pi_w, \qquad a \in A_{sp}, \label{compact_constr_shortpipe_1}\\
	& 0 \le q_a^+ \le q_a^{\text{max}} x_a^{\text{dir}}, \qquad a \in A_{sp}, \label{compact_constr_shortpipe_2}\\
    & 0 \le q_a^- \le q_a^{\text{max}} (1 - x_a^{\text{dir}}), \qquad a \in A_{sp},\label{compact_constr_shortpipe_3}\\
	& \pi_v - \pi_w = \alpha_a (q_a^+)^2 - \alpha_a (q_a^-)^2, \qquad a \in A_r, \label{compact_constr_resistor_1}\\
	& 0 \le q_a^+ \le q_a^{\text{max}} x_a^{\text{dir}}, \qquad a \in A_r, \label{compact_constr_resistor_2}\\
	& 0 \le q_a^- \le q_a^{\text{max}} (1 - x_a^{\text{dir}}), \qquad a \in A_r, \label{compact_constr_resistor_3}\\
 	& \kappa_a^\text{min} \pi_v - M(1-z_a) \le \pi_w \le \kappa_a^\text{max} \pi_v + M(1-z_a), \qquad a \in A_{cp} \cup A_{cv},\label{compact_constr_comp_control_1}\\
 	& 0 \le q_a^+ \le q_a^{\text{max}} z_a, \qquad a \in A_{cp} \cup A_{cv},\label{compact_constr_comp_control_2}\\
 	& (\pi_v^\text{min})^\prime - M(1-z_a) \le \pi_v, \qquad a = (v,w) \in A_{cp} \cup A_{cv}, \label{compact_constr_comp_control_3}\\
 	& \pi_w \le (\pi_w^{\text{max}})^\prime + M(1-z_a), \qquad a = (v, w) \in A_{cp} \cup A_{cv}, \label{compact_constr_comp_control_4}\\
 	& \pi_v - \pi_w \le M(1-z_a), \qquad a \in A_v, \label{compact_constr_valve_1}\\
 	& \pi_v - \pi_w \ge -M(1-z_a), \qquad a \in A_v, \label{compact_constr_valve_2}\\
 	& 0 \le q_a^-, q_a^+ \le q_a^{\text{max}}z_a, \qquad a \in A_v, \label{compact_constr_valve_3}\\
 	& 0 \le q_a^+ \le q_a^{\text{max}} x_a^{\text{dir}}, \qquad a \in A_v, \label{compact_constr_valve_4}\\
	& 0 \le q_a^- \le q_a^{\text{max}} (1 - x_a^{\text{dir}}), \qquad a \in A_v.\label{compact_constr_valve_5}
\end{align}
}

Note that we omit the subscripts for the variables under the ``min'' and variables $z$ could either be the diameter choices of the pipes or the states of the compressors, the valves, and the control valves. In this model, the objective function (\ref{compact_obj}) minimizes the construction cost of the pipes, also known as the budget. The scalar $M$ represents a large number. For the constraints that involve $M$, we can alternatively write them in a nonlinear fashion, which eliminates the need for big-$M$s. In particular, for constraints (\ref{compact_constr_comp_control_1}) and (\ref{compact_constr_comp_control_3})-(\ref{compact_constr_comp_control_4}), we have
\begin{align}
	& z_a \kappa_a^\text{min} \pi_v \le \pi_w, \qquad a \in A_{cp} \cup A_{cv}, \label{compact_constr_comp_control_p_1}\\
	& z_a \pi_w \le \kappa_a^\text{max} \pi_v, \qquad a \in A_{cp} \cup A_{cv}, \label{compact_constr_comp_control_p_2}\\
	& z_a (\pi_v^\text{min})^\prime \le \pi_v, \qquad a \in A_{cp} \cup A_{cv}, \label{compact_constr_comp_control_p_3}\\ 
	& z_a \pi_w \le (\pi_w^\text{max})^\prime, \qquad a \in A_{cp} \cup A_{cv}, \label{compact_constr_comp_control_p_4}, 
\end{align}
and for constraints (\ref{compact_constr_valve_1})-(\ref{compact_constr_valve_2}), we have
\begin{align}
    & (\pi_v - \pi_w)z_a = 0, \qquad a \in A_v \label{compact_constr_valve_p_1}.
\end{align}

We group the constraints in blocks with block names and provide a summary in Table~\ref{table:compact_constraints}. We will refer to the corresponding set of constraints by their block name. 
\begin{table}[!htbp]
\centering
\begin{tabular}{|c|c|l|}
	\hline
	Constraints & Block names & Explanations \\\hline\hline
	(\ref{compact_constr_flow_conservation}) & \texttt{Flow\_conserv} & Flow conservation\\\hline
	(\ref{compact_constr_potential_bound})-(\ref{compact_constr_direction}) & \texttt{Bound} & Bounds on potentials (\ref{compact_constr_potential_bound}); binary directions (\ref{compact_constr_direction})\\\hline
	\multirow{2}{*}{(\ref{compact_constr_pipe_1})-(\ref{compact_constr_pipe_5})} & \multirow{2}{*}{\texttt{Pipe}} & Flow limits on diameter choices (\ref{compact_constr_pipe_1}); potential drop (\ref{compact_constr_pipe_2});\\
	& & flow limits on directions (\ref{compact_constr_pipe_3})-(\ref{compact_constr_pipe_4}); diameter selection (\ref{compact_constr_pipe_5})\\\hline
	(\ref{compact_constr_shortpipe_1})-(\ref{compact_constr_shortpipe_3}) & \texttt{Short\_pipe} & Potential (\ref{compact_constr_shortpipe_1}); flow limits on directions (\ref{compact_constr_shortpipe_2})-(\ref{compact_constr_shortpipe_3})\\\hline
	(\ref{compact_constr_resistor_1})-(\ref{compact_constr_resistor_3}) & \texttt{Resistor} & Potential drop (\ref{compact_constr_resistor_1}); flow limit on directions (\ref{compact_constr_resistor_2})-(\ref{compact_constr_resistor_3})\\\hline
	\multirow{2}{*}{(\ref{compact_constr_comp_control_1})-(\ref{compact_constr_comp_control_4})} & \texttt{Comp\_and\_} & Depend on on/off states; Potential increase/relief limit (\ref{compact_constr_comp_control_1});\\
	& \texttt{cont\_valve} & flow limit (\ref{compact_constr_comp_control_2}); additional bounds (\ref{compact_constr_comp_control_3})-(\ref{compact_constr_comp_control_4})\\\hline
	\multirow{2}{*}{(\ref{compact_constr_valve_1})-(\ref{compact_constr_valve_5})} & \multirow{2}{*}{\texttt{Valve}} & Depend on on/off states; Potential (\ref{compact_constr_valve_1})-(\ref{compact_constr_valve_2});\\
	& & flow limit (\ref{compact_constr_valve_3}); flow limits on directions (\ref{compact_constr_valve_4})-(\ref{compact_constr_valve_5})\\\hline
	\multirow{2}{*}{(\ref{compact_constr_comp_control_2}), (\ref{compact_constr_comp_control_p_1})-(\ref{compact_constr_comp_control_p_4})} & \texttt{Comp\_and\_} & \multirow{2}{*}{Same as \texttt{Comp\_and\_cont\_valve block}}\\
	&\texttt{cont\_valve\_nl} & \\\hline
	\multirow{2}{*}{(\ref{compact_constr_valve_3})-(\ref{compact_constr_valve_5}), (\ref{compact_constr_valve_p_1})} & \multirow{2}{*}{\texttt{Valve\_nl}} & \multirow{2}{*}{Same as \texttt{Valve} block}\\
	& &\\\hline	
\end{tabular}
\caption{Constraint blocks} \label{table:compact_constraints}
\end{table}

Note that the above formulation can be extended to the reinforcement problem by considering, for each existing pipe, an additional diameter choice 
with no cost along with potential loss equation (\ref{pipe_physics}) in which the potential loss coefficient is computed based on the diameter of the existing pipe. 

\section{Decomposition framework} \label{sec:decomposition_framework}
We now present a decomposition framework to solve the design problem. The decomposition consists of three major components: primal bound loop, binary search on budget, and initial budget search. Before we present the details on each component, we introduce more background on the convex program introduced in~\cite{cherry1951} and adapted in~\cite{collins197778}, which was mentioned briefly in literature review. 
\subsection{CVXNLP} \label{subsec:CVXNLP}
The convex program is called (CVXNLP) in~\cite{raghunathan2013}; we adopt the same name. We base the discussions of (CVXNLP) on a gas network in contrast to a water network in~\cite{raghunathan2013} in this section for completeness. For a network with only pipes, i.e., $\mathcal{A} = A_p$, (CVXNLP) is closely related to the following set of \textit{network analysis equations}:
\begin{align}
& \pi_v - \pi_w = \text{sgn}(q_a) \phi(\vert q_a \vert ), \qquad a \in A_p, \label{network_ana_potential_drop} \\
& \sum_{a \in A_\text{in}(v)} q_a - \sum_{a \in A_\text{out}(v)} q_a = d_v, \qquad v \in \mathcal{V} \label{network_ana_flow_conserv},
\end{align}
where $\text{sgn}(\cdot)$ is the sign function and $\phi(\cdot)$ is the potential loss function. In the network analysis equations, (\ref{network_ana_potential_drop}) is the potential drop equation and (\ref{network_ana_flow_conserv}) is the flow conservation. (CVXNLP) is formally given by
\begin{align}
\min_{q^+, q^-} \; & \sum_{a \in A_p} \Phi(q_a^+) + \Phi(q_a^-), \\
\text{s.t.} \; & \sum_{a \in A_\text{in}(v)} (q_a^+ - q_a^-) - \sum_{a \in A_\text{out}(v)} (q_a^+ - q_a^-) = d_v, \qquad v \in \mathcal{V}, \label{CVXNLP_flow_conserv}\\
& 0 \le q_a^-, q_a^+, \qquad a \in A_p, \label{CVXNLP_bounds}
\end{align}
where $\Phi(\cdot)$ is defined by
\begin{equation}
	\Phi(q) = \int_0^q \phi(q^\prime)dq^\prime.
\end{equation}
(CVXNLP) is formally linked to the network analysis equations by the following theorem.

\begin{theorem} \label{CVXNLP_network_eqv}
	If the potential loss function $\phi(\cdot)$ is strictly monotonically increasing function of flowrate, $q$, with $\phi(0) = 0$, then there exists a solution $(\pi, q)$ to the network analysis equations if and only if there exists a solution $(\hat{q}^+, \hat{q}^-, \hat{\lambda}, \hat{\mu}^+, \hat{\mu}^-)$ to (CVXNLP) where $\lambda$, $\mu^+$, and $\mu^-$ are dual variables to the flow conservation constraint (\ref{CVXNLP_flow_conserv}) and bounds constraints (\ref{CVXNLP_bounds}), respectively. 
\end{theorem}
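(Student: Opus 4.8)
The plan is to prove the equivalence by passing through the Karush--Kuhn--Tucker (KKT) conditions of (CVXNLP) and identifying the dual variables of the flow-conservation constraints with the node potentials. The first observation is that the hypotheses on $\phi$ make (CVXNLP) a genuinely convex program: since $\phi$ is strictly increasing with $\phi(0)=0$, the antiderivative $\Phi(q)=\int_0^q \phi(q')\,dq'$ is strictly convex and attains its minimum at $q=0$. Because the constraints (\ref{CVXNLP_flow_conserv})--(\ref{CVXNLP_bounds}) are linear, no constraint qualification beyond linearity is required, so the KKT conditions are both necessary and sufficient for optimality. I would therefore reduce the theorem to a statement about KKT points: a tuple solves the network analysis equations if and only if a corresponding primal-dual tuple satisfies the KKT system of (CVXNLP).

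Writing the Lagrangian with multipliers $\lambda_v$ for constraint (\ref{CVXNLP_flow_conserv}) (entering as $+\lambda_v$ times $d_v$ minus the net flow) and $\mu_a^+,\mu_a^-\ge 0$ for the bounds (\ref{CVXNLP_bounds}), stationarity in $q_a^+$ and $q_a^-$ for an arc $a=(v,w)$ yields $\phi(q_a^+)=\lambda_w-\lambda_v+\mu_a^+$ and $\phi(q_a^-)=\lambda_v-\lambda_w+\mu_a^-$, together with complementary slackness $\mu_a^+q_a^+=\mu_a^-q_a^-=0$. The key structural lemma I would establish next is that at any KKT point at most one of $q_a^+,q_a^-$ is strictly positive: if both were positive then both multipliers vanish, the two stationarity relations force $\phi(q_a^+)+\phi(q_a^-)=0$, contradicting $\phi(q_a^+),\phi(q_a^-)>0$, which holds precisely because $\phi$ is strictly increasing through the origin. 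This is exactly what lets me pass between the split variables $(q_a^+,q_a^-)$ and the signed flow $q_a=q_a^+-q_a^-$, since then $|q_a|=q_a^++q_a^-$ and $\mathrm{sgn}(q_a)\phi(|q_a|)$ collapses to a single branch.

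For the forward direction I would take a KKT tuple, set $q_a=\hat q_a^+-\hat q_a^-$ and $\pi_v=-\hat\lambda_v$, and verify the potential-drop equation (\ref{network_ana_potential_drop}) case by case using the lemma: when $\hat q_a^+>0$ (so $q_a>0$ and $\mu_a^+=0$) stationarity gives $\phi(q_a)=\lambda_w-\lambda_v=\pi_v-\pi_w$; the case $\hat q_a^->0$ is symmetric; and when both are zero the two inequalities $\mu_a^+,\mu_a^-\ge 0$ squeeze $\pi_v=\pi_w$, matching $\mathrm{sgn}(0)\phi(0)=0$. Flow conservation (\ref{network_ana_flow_conserv}) is then immediate from primal feasibility (\ref{CVXNLP_flow_conserv}) after substituting $q_a=q_a^+-q_a^-$. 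For the converse I would start from a network solution $(\pi,q)$, define $q_a^+=\max(q_a,0)$ and $q_a^-=\max(-q_a,0)$, set $\lambda_v=-\pi_v$, and solve the stationarity relations for $\mu_a^+,\mu_a^-$; equation (\ref{network_ana_potential_drop}) guarantees that the multiplier on the inactive bound vanishes while the other equals $|\pi_v-\pi_w|\ge 0$, so dual feasibility and complementary slackness hold and the constructed tuple is a KKT point, hence optimal by convexity.

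I expect the only genuine subtlety to be the bookkeeping of signs---in particular fixing the identification $\pi_v=-\lambda_v$ so that flow runs from high to low potential, and making the arc-incidence coefficients in the stationarity conditions precise; the existence equivalence then follows because a feasible point of this linearly constrained convex program admits an optimal primal-dual pair exactly when the network equations are solvable. The strict monotonicity of $\phi$ is used in precisely one essential place, the at-most-one-positive lemma, and as a byproduct it delivers uniqueness of the optimal flow $q$.
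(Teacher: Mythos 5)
Your proposal is correct and follows essentially the same route as the paper's proof: characterize solutions of (CVXNLP) via its first-order (KKT) conditions, which are necessary and sufficient because $\Phi$ is convex and the constraints are linear; establish that at most one of $\hat{q}_a^+,\hat{q}_a^-$ can be positive; identify the flow-conservation multipliers with the potentials (your $\pi_v=-\hat{\lambda}_v$ versus the paper's $\pi_v=\hat{\lambda}_v$ is just the opposite Lagrangian sign convention, and your version is self-consistent); do the same three-case analysis for the forward direction; and for the converse construct $\hat{q}^\pm,\hat{\lambda},\hat{\mu}^\pm$ explicitly from $(\pi,q)$ and invoke convexity. The only local difference is the proof of the at-most-one-positive lemma: you derive the contradiction $\phi(\hat{q}_a^+)+\phi(\hat{q}_a^-)=0$ from stationarity plus complementary slackness, whereas the paper perturbs the primal solution to $\tilde{q}_a^\pm$ and contradicts optimality of the objective---both hinge on strict monotonicity of $\phi$ and both are valid.
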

\begin{proof}
The proof is adapted from a proof in~\cite{raghunathan2013} and can be found in Appendix~\ref{sec:appendix_a}. 
\end{proof}
The monotonicity assumption needed for the theorem to hold is commonly satisfied by gas networks. In addition, as a result of the equivalence between (CVXNLP) and the network analysis equations stated in  Theorem~\ref{CVXNLP_network_eqv}, we can solve the convex (CVXNLP) in lieu of the nonconvex network analysis equations and obtain a solution $(\pi, q)$. As there are no bounds enforced in the network analysis equations for $\pi$, we have to perform an additional step to verify that $\pi$ satisfies the corresponding bounds.

\subsection{Primal bound loop} \label{subsec:primal_bound_loop}
The equivalence discussed in Section~\ref{subsec:CVXNLP} motivates us to develop a decomposition procedure that solves a variant of (CVXNLP) as a master problem, while a subproblem is used to check for feasibility. We call this procedure the primal bound loop, which checks whether a budget $C$ is feasible with respect to a nomination. The master problem, denoted by ($P_m$), is based on (CVXNLP) and is as follows:
\begin{align}
(P_m) \;\;\;\; \min_{z,q^+,q^-} \; & \sum_{i \in [n]} \sum_{a \in A_p} \frac{\alpha_{a,i}}{3}(q_{a,i}^+)^3 + \frac{\alpha_{a,i}}{3}(q_{a,i}^-)^3 + \sum_{a \in A_r} \frac{\alpha_a}{3} (q_a^+)^3 + \frac{\alpha_a}{3} (q_a^-)^3, \label{primal_master_obj}\\
\text{s.t.}\; & (\ref{compact_constr_flow_conservation}),(\ref{compact_constr_pipe_1}), (\ref{compact_constr_pipe_5}), (\ref{compact_constr_comp_control_2}), (\ref{compact_constr_valve_3}), \notag \\
& 0 \le q_a^-, q_a^+ \le q_a^\text{max}, \qquad a \in A_{sp} \cup A_r, \label{primal_master_sp_r_bound}\\
& \sum_{a \in A_p}\sum_{i \in [n]} f_{a,i} z_{a,i} \le C. \label{primal_master_budget}
\end{align}
In this model, the objective function (\ref{primal_master_obj}) extends (CVXNLP) to account for multiple flow variables $q_{a,i}^+$ for $a \in A_p$ and $i \in [n]$. Constraint (\ref{compact_constr_flow_conservation}) is the flow conservation. Constraints (\ref{compact_constr_pipe_1}) and (\ref{compact_constr_pipe_5}) on the pipes ensure one diameter choice is selected and the corresponding flow limit is enforced. Constraints (\ref{compact_constr_comp_control_2}) and (\ref{compact_constr_valve_3}) on the active system components ensure flows are only allowed when the corresponding binaries are on. Constraint (\ref{primal_master_sp_r_bound}) enforces the flow limit on the short pipes and resistors. The last constraint (\ref{primal_master_budget}) is a budget constraint on the construction cost of pipes. 

The above model differs from (CVXNLP) mainly in two ways. Firstly, we have introduced the diameter choices, $z_{a,i}$ for $a \in A_p$ and $i \in [n]$ and configurations for the active system components, $z_a$ for $a \in A_{cp} \cup A_{cv} \cup A_v$. If the diameter choices and configurations of the active system components are fixed, ($P_m$) resembles the original (CVXNLP). Secondly, we have a constraint to upper bound the construction cost of pipes by the budget $C$. We hope to obtain favorable diameter choices and active system component configurations from solving this modified (CVXNLP) due to the equivalence between (CVXNLP) and the network analysis equations shown in Theorem~\ref{CVXNLP_network_eqv}. In the solution of ($P_m$), we denote the optimal diameter choices by $z_{a, i}^*$ for $a \in A_p$ and $i \in [n]$ and the optimal active system configurations by $z_a^*$ for $a \in A_{cp} \cup A_{cv} \cup A_v$. We can then compute the potential loss coefficient and the flow limit of each pipe as follows:
\begin{align}
    \alpha_a & = \sum_{i \in [n]} \alpha_{a, i} z_{a,i}^*, \qquad a \in A_p, \label{expansion_pressure_loss}\\
    q_{a}^\text{max} & = \sum_{i \in [n]} q_{a,i}^\text{max} z_{a,i}^*, \qquad a \in A_p. \label{expansion_flow_limits} 
\end{align}
For the subproblem, denoted by ($P_s$), since we have additional active system components for which the constraints governing their corresponding potential changes are not included in network analysis equations,  we solve a variant of the nomination validation problem, instead of performing simple bound violation verifications, to check if the diameter choices and configurations of the active system components are feasible with respect to the nomination. ($P_s$) is given by:
\begin{align}
   (P_s) \;\;\;\; & \; \text{Find } \; q^+, q^-, x^\text{dir}, \pi, \label{subproblem_obj}\\
	\text{s.t.}\; & \texttt{Flow\_conserv}, \notag \\
	& \texttt{Bound}, \notag\\
	& \texttt{Pipe}, \notag\\
	& \texttt{Short\_pipe}, \notag\\
	& \texttt{Resistor}, \notag \\
	& \texttt{Comp\_and\_cont\_valve(\_nl)}, \notag\\
	& \texttt{Valve(\_nl)}. \notag
\end{align}
In this nomination validation problem, we solve a feasibility problem with seven blocks of constraints that are simplified from the blocks in Table~\ref{table:compact_constraints}. We list the changes to the constraints as follows.

\texttt{Flow\_conserv:} with the diameter choices fixed, we only need two flow variables, $q_a^+$ and $q_a^-$, for each pipe $a \in A_p$ and the simplified flow conservation constraint is given by:
\begin{equation}	
\sum_{a \in A_\text{in}(v)} q_a^+ - \sum_{a \in A_\text{in}(v) \backslash A_{cp} \cup A_{cv}} q_a^- - \left(\sum_{a \in A_\text{out}(v)} q_a^+ - \sum_{a \in A_\text{out}(v) \backslash A_{cp} \cup A_{cv}} q_a^- \right) = d_v, \qquad v \in \mathcal{V}. \label{subproblem_flow_conservation}\\
\end{equation}

\texttt{Pipe:} with the diameter choices determined, we compute the potential loss coefficients and flow limits, and we write the potential loss constraints as 
\begin{equation}
	\pi_v - \pi_w = \alpha_a (q_a^+)^2 - \alpha_a (q_a^-)^2, \qquad a \in A_p, \label{subproblem_pipe_1}
\end{equation}
and use the computed flow limits $q_a^\text{max}$ from (\ref{expansion_flow_limits}) as:
\begin{align}
	& 0 \le q_a^+ \le q_a^{\text{max}} x_a^{\text{dir}}, \qquad a \in A_p, \label{subproblem_pipe_2}\\
	& 0 \le q_a^- \le q_a^{\text{max}} (1 - x_a^{\text{dir}}), \qquad a \in A_p.\label{subproblem_pipe_3}
\end{align}

\texttt{Comp\_and\_cont\_valve(\_nl):} we fix the compressor and control valve configurations obtained in ($P_m$). The constraints are then linear and free of $M$. 

\texttt{Valve(\_nl):} we fix the valve configurations obtained in ($P_m$). The constraints are then linear and free of $M$. 

There are no changes to other constraints. 

Note that this nomination validation problem is still nonconvex due to constraint (\ref{subproblem_pipe_1}) and the potential loss constraint for resistors. There can be two outcomes from solving this subproblem ($P_s$). If it is infeasible, we can add an integer no-good cut  to the master problem ($P_m$) of the form:
\begin{align}
    & \sum_{a \in A_{cp} \cup A_{cv} \cup A_v, z_a^* = 0} z_a + \sum_{a \in A_{cp} \cup A_{cv} \cup A_v, z_a^* = 1} (1- z_a) \notag \\ 
    & + \sum_{i \in [n]}\sum_{a \in A_p, z_{a,i}^* = 0} z_{a,i} +  \sum_{i \in [n]}\sum_{a \in A_p, z_{a,i}^* = 1} (1 - z_{a,i}) \ge 1. \label{subproblem_nogoodcut}
\end{align}
If it is feasible, we call budget $C$ a feasible budget with respect to the nomination. 

The iterative procedure terminates when we obtain a feasible budget or when the master problem ($P_m$) becomes infeasible after adding some integer no-good cuts. In the latter case, we call budget $C$ an infeasible budget.

\subsection{Binary search on budget}
As the goal of this design problem is to minimize the construction cost of the network, we propose a binary search procedure to do so. A feasible budget from the primal bound loop provides an upper bound, $\overline{C}$, on the budget while an infeasible budget provides a lower bound, $\underline{C}$, on the budget. We present the binary search in Algorithm~\ref{binary_search_C}. For the termination conditions of the binary search in Line~\ref{binary_termination}, we consider a time limit, absolute gap $\varepsilon_e$, i.e, $\overline{C} - \underline{C} < \varepsilon_e$, or relative gap $\varepsilon_r$, i.e., $(\overline{C} - \underline{C}) / \underline{C} < \varepsilon_r$. 

\begin{algorithm}[!htbp]
\caption{Binary search of budget} \label{binary_search_C}
\textbf{Input:} Initial budget $C$, upper bound $\overline{C} = \infty$ and valid lower bound $\underline{C}$ by budget initialization \Comment*[h]{see Section~\ref{budget_initialization}}\\
\While{not terminated \label{binary_termination}}{
Solve problems ($P_m$) and ($P_s$) with budget $C$ \Comment*[h]{primal bound loop}\\
\uIf{$C$ is a feasible budget}{
	\lIf{$C < \overline{C}$}{$\overline{C} = C$}
	\lIf{$C / 2 \le \underline{C}$}{$C = (\underline{C} + C) / 2$}
	\lElse{$C = C / 2$}
}
\uElseIf{$C$ is an infeasible budget}{
	\lIf{$C > \underline{C}$}{$\underline{C} = C$}
	\lIf{$2C \ge \overline{C}$}{$C = (\overline{C} + \underline{C}) / 2$}
	\lElse{$C = 2C$}
}
\Else(\Comment*[h]{primal bound loop terminated due to time limit}){
	\lIf{$2C \ge \overline{C}$}{$C = (\overline{C} + C) / 2$}
	\lElse{$C = 2C$}
}
} 
\end{algorithm}

\subsection{Initial budget search} \label{budget_initialization}
To obtain a better initial starting budget for the binary search, we propose the following initial budget search procedure. This procedure is again an iterative procedure involving a master problem and a subproblem. The master problem, denoted by ($I_m$), is given by
\begin{align}
(I_m) \;\;\;\; \min_z \; & \sum_{a \in A_p}\sum_{i \in [n]} f_{a,i} z_{a,i}, \label{initial_budget_obj}\\
\text{s.t.} \; & (\ref{compact_constr_pipe_5}), \notag\\
& z_{a,i} \in \{0,1\}, \qquad a \in A_p, i \in [n] \\
& z_a \in \{0,1\}, \qquad a \in A_{cp}.
\end{align}
In this model, the objective function (\ref{initial_budget_obj}) is the same as the design model which minimizes the construction cost of the pipes. Constraint (\ref{compact_constr_pipe_5}) allows exactly one diameter choice for each pipe to be selected. In ($I_m$), we only consider the selection of diameter choices and configurations of the compressors. This integer program aims to obtain the cheapest construction cost of the pipes along with the compressor configurations, and can be solved very quickly due to the much smaller feasible space and simpler structure compared to ($P_m$). We can similarly compute the potential loss coefficients and flow limits based on the optimal diameter choices as shown in (\ref{expansion_pressure_loss}) and (\ref{expansion_flow_limits}). 

The subproblem is a variant of the nomination validation problem which includes the configurations of valves and control valves to check if the diameter choices and compressor configurations are feasible with respect to the nomination. The subproblem, denoted by ($I_s$), is the same as ($P_s$), except for the constraints on the control valves and the valves since we do not obtain configurations for them from the master problem ($I_m$) in contrast to ($P_m$). We list the changes from ($P_s$) to obtain ($I_s$).

\textbf{Variables:} we add the binary variables for the on and off states of control valves and valves. 

\textbf{Constraints:} \texttt{Comp\_and\_cont\_valve(\_nl):} we fix the configurations of compressors obtained in ($I_m$) and consequently the constraints for compressors are now linear and free of $M$. For the control valves, our preliminary studies suggest the use of \texttt{comp\_and\_cont\_valve\_nl} block.

\texttt{Valve(\_nl):} Our preliminary studies suggest the use of \texttt{Valve\_nl} block for valves.

There are no changes to other constraints.

Solving the subproblem ($I_s$) has two outcomes. If the cheapest diameter choices and compressor configurations are infeasible with respect to the nomination, we add an integer no-good cut for that set of diameter choices and compressor configurations similar to (\ref{subproblem_nogoodcut}) to the master problem ($I_m$) and resolve. Otherwise, we obtain the optimal budget, i.e., the optimal budget for this nomination is the corresponding objective value of ($I_m$). The initial budget search procedure can be run for a certain time or a fixed number of iterations. It produces an objective value below which there is no feasible budget, thus producing an initial dual bound. This lower bound on the optimal objective function value of the network design problem  can be used to initialize the binary search. 

\section{Numerical experiments} \label{sec:numerical_experiments}
\subsection{Instances} 
Our numerical experiments are based on the GasLib library networks. In a preliminary study, we concluded that the design problem based on the largest GasLib-4197 network remains challenging to solve with our framework. As a result, we consider GasLib-11, GasLib-24, GasLib-40, GasLib-134, and GasLib-582 networks. Note that GasLib-11, GasLib-24, and GasLib-40 are considered to be simple test networks while GasLib-134 and GasLib-582 are considered to be the realistic networks by~\cite{schmidt2015}. The size of the networks is given in Tables~\ref{instance_nodes} and~\ref{instance_arcs}. Depending on the nomination, a source node may have zero supply and a sink node may have zero demand. 

\begin{table}[!htbp]
\centering
\begin{tabular}{|c|c|c|c|c|}
\hline 
Name & Nodes & Sources & Sinks & In-nodes \\\hline\hline
GasLib-11 & 11 & 3 & 3 & 5 \\\hline
GasLib-24 & 24 & 3 & 5 & 16 \\\hline
GasLib-40 & 40 & 3 & 29 & 8 \\\hline
GasLib-134 & 134 & 3 & 45 & 86 \\\hline
GasLib-582 & 582 & 31 & 129 & 422\\\hline
\end{tabular}
\caption{Nodes in GasLib networks} \label{instance_nodes}
\end{table}

\begin{table}[!htbp]
\centering
\begin{tabular}{|c|c|c|c|c|c|c|}
\hline 
Name & Pipes & Short pipes & Resistors & Compressors & Control valves & Valves \\\hline\hline
GasLib-11 & 8 & 0 & 0 & 2 & 0 & 1 \\\hline
GasLib-24 & 19 & 1 & 1 & 3 & 1 & 0 \\\hline
GasLib-40 & 39 & 0 & 0 & 6 & 0 & 0 \\\hline
GasLib-134 & 86 & 45 & 0 & 1 & 1 & 0 \\\hline
GasLib-582 & 278 & 269 & 8 & 5 & 23 & 26\\\hline
\end{tabular}
\caption{Arcs in GasLib networks} \label{instance_arcs}
\end{table}

For the smaller networks of GasLib-11, GasLib-24, and GasLib-40, there is one nomination given along with the network. For the larger networks of GasLib-134 and GasLib-582, there are numerous nominations given. For GasLib-134 network, the nominations are named after the day on which the nomination is based and we pick two nominations. The nominations given with the GasLib-582 network are divided into five categories, namely, warm, mild, cool, cold, and freezing, to simulate the temperature conditions. There are two observations about the nominations. Firstly, as temperature conditions change from warm to freezing, the nominations become more demanding. There are more sinks with positive demands and the magnitudes of demands increase. Secondly, the nominations from the same temperature category vary much less compared to nominations across temperature categories. Therefore, we pick one nomination from each temperature category for the experiments. For all networks, we vary the nomination by stress levels similar to~\cite{borraz-Sanchez2015}. In particular, we use the stress levels $\{0,1,\; 0.5, \; 1.0,\; 1.5,\; 2.0\}$ and multiply the demand $d_v$ for each $v \in \mathcal{V}$ in a nomination by each stress level to create an instance. For each pipe, based on the diameter given in the GasLib networks, we use multipliers from the set $\{0.8,\; 1.0,\; 1.3,\; 1.5\}$ to create 4 different diameter choices. 

\subsection{Implementation considerations and settings} \label{implementation_considerations}
We first provide some notes on the implementation of the primal bound loop. As the objective function (\ref{primal_master_obj}) in the master problem ($P_m$) is cubic in the flow variables, there are several possible ways to implement it: 
\begin{itemize}
	\item There are nonlinear mixed integer program solvers that can take the master problem ($P_m$) as it is, for example, BARON~\cite{ts:05,ks:18} and SCIP~\cite{Achterberg2009,GamrathEtal2020OO}.
	\item The cubic objective function is second-order cone representable. For each of the cubic terms in the flow variable, we can introduce an additional variable. Consequently, we obtain a constraint in the form of $q^3 \le t$ where $q$ represents the flow variable and $t$ represents the new variable that is an upper bound to $q^3$ and we can write the second-order cone representation of $q^3 \le t$ by
	\begin{equation}
	s \ge 0, \; s + q \ge 0, \; (s + q)^2 \le w,\; w^2 \le t(s+q).  	
	\end{equation}
	The resulting second-order cone program can be handled by specialized solvers such as MOSEK~\cite{mosek}. 
	\item To take advantage of the Gurobi's~\cite{gurobi} improved capability in solving quadratic programs, for each of the cubic terms in the flow variable $q$ in the objective function, we introduce an additional variable $q_{qua}$ with $q_{qua} = q^2$. Consequently, we have a bilinear term $q q_{qua}$ in the objective function with an additional constraint. The constraint $q_{qua} = q^2$ can be re-written into the convex constraint $q_{qua} \ge q^2$. Moreover, for the pipes, as we have binary variables corresponding to the diameter choices, the convex constraint $q_{qua} \ge q^2$ can be strengthened to $q_{qua} z \ge q^2$ by perspective strengthening (see~\cite{frangioni2006}) where $z$ represents the binary variable for the diameter choice. 
\end{itemize}

We implemented all three methods. Even though the original cubic formulation (used with BARON 22.9.1~\cite{sahinidis:baron:22.9.1} and SCIP 7.0.1~\cite{GamrathEtal2020OO}) and the second-order cone formulation (used with MOSEK 9.3.10~\cite{mosek}) are convex, these solvers tend to be slower due to the presence of the binary variables. On the other hand, Gurobi 9.5.1~\cite{gurobi} is able to handle the reformulation of the cubic objective function well. As a result, we decided to use Gurobi 9.5.1 to solve ($P_m$) and ($P_s$). This also gives us the opportunity to study the impact of perspective strengthening on the computational speed.

In addition, since we only use the values of the binary variables of the pipe diameter choices and active system component configurations from the master problem ($P_m$) to fix the corresponding variables in the subproblem ($P_s$), we do not need to solve the master problem ($P_m$) to optimality. We can either set a time limit or a non-default optimality gap and we opt to use a time limit of $60$ seconds. 

We run the experiments on a computer with an Intel i7 CPU (4.20GHz) with 16GB RAM. The computer runs the Ubuntu 20.04 LTS operating system. The framework is coded in Python with Pyomo. We use Gurobi 9.5.1 to solve problems ($I_m$) and ($I_s$) as well. Algorithm~\ref{overall_procedure} shows the exact steps we use to solve the problem combining the procedures from the primal bound loop, the binary search on budget, and the initial budget search.

\begin{algorithm}[!htbp]
\caption{Overall procedure} \label{overall_procedure}
Initial budget search ($I_m$) and ($I_s$) is run for 10 min. \Comment*[h]{Initial budget search phase}\\
\lIf{a feasible budget is obtained}{terminate with the optimal budget for this nomination}
\Else(\Comment*[h]{Binary search phase} \label{binary_search_overall_procedure}){Set starting budget based on the returned value from initial budget search;\\
Binary search is run for 5 hr; for each candidate budget, primal bound loop ($P_m$) and ($P_s$) is run for 45 min to check if the budget is feasible.\\
}
\Return{$\overline{C}$ and $\underline{C}$ from binary search} 
\end{algorithm}
A note on the time limit for the initial budget search. We performed studies to extend the time limit to longer than 10 min and the improvement on the returned value is not significant. As a result, we decided to limit the initial budget search to 10 minutes.  

\subsection{Results}
In this section, we present the computational results. In addition to the results from our proposed framework, we provide some discussions about the MINLP formulations and another approach adapted from~\cite{borraz-Sanchez2015} using a computational study on nomination warm\_31 from GasLib-582, which comes from the least demanding temperature categories. The papers by~\cite{borraz-Sanchez2015} and~\cite{shiono2016} are two recent works on gas network expansion. The work of~\cite{shiono2016} specifically considers a tree-like network while the GasLib-582 network contains cycles. Although~\cite{borraz-Sanchez2015} mainly focuses on the reinforcement problem, the approach can be modified to tackle the design problem.

Recall that the MINLP formulations comprise an objective function (\ref{compact_obj}) along with constraint blocks \texttt{Flow\_conserv}, \texttt{Bound}, \texttt{Pipe}, \texttt{Short\_pipe}, \texttt{Resistor}, \texttt{Comp\_and\_cont\_valve} or \texttt{Comp\_and\_cont\_valve\_nl}, and \texttt{Valve} or \texttt{Valve\_nl}. In particular, we have two different MINLP formulations, one utilizing \texttt{Comp\_and\_cont\_valve} and \texttt{Valve} blocks while the other utilizing \texttt{Comp\_and\_cont\_valve\_nl} and \texttt{Valve\_nl} blocks. Solvers that can solve nonconvex MINLPs are considered. Note that CPLEX currently only supports nonconvexity in the objective function and hence is not applicable. We also discovered that Gurobi at times returns solutions with large constraint violations for this MINLP formulation. As a result, we focused on SCIP and BARON and we report the results on solving the MINLP formulation with BARON, which performed better in our tests.

\subsubsection{Small networks}
We consider the small networks of GasLib-11, GasLib-24, and GasLib-40 in this section. Based on the upper bound $\overline{C}$ and lower bound $\underline{C}$ from the procedure described in Algorithm~\ref{overall_procedure}, we define the percentage gap by
\begin{equation}
{\rm gap} = \frac{\overline{C} - \underline{C}}{\underline{C}} \times 100\%. \label{percentage_gap}
\end{equation}
We present the detailed results in Tables~\ref{results_gaslib_11_1} to~\ref{results_gaslib_40_2}. In particular, Tables~\ref{results_gaslib_11_1} to~\ref{results_gaslib_40_1} report the bounds $\overline{C}$ and $\underline{C}$ in $10^9$, percentage gap, and time information from the framework with and without perspective strengthening and better result from solving the two MINLP formulations with BARON as a comparison. We also show the time it takes to reach 20\% optimality gap ``20\%'' in a bracket after the time for instances with optimality gaps larger than zero, but less than 20\% at termination. The column ``Imp'' reports the improvement in the gaps from the perspective strengthening formulation. Additionally, we make the better upper bound $\overline{C}$ between our framework and BARON bold. The iteration information is reported in Tables~\ref{results_gaslib_11_2} to~\ref{results_gaslib_40_2}. The column ``Initial budget'' indicates whether the nomination is solved by the initial budget search procedure outlined in Section~\ref{budget_initialization}. The columns ``Binary search'' and ``Primal bound'' report the number of budget checked and the number of no-good cuts (\ref{subproblem_nogoodcut}) added.

For the smaller networks, we see the performance of the proposed framework is comparable to BARON up to stress level of $1.0$. For the larger stress levels, the framework returns larger gaps. It is challenging to prove an infeasible budget because of the large combinatorial space created from the diameter choices and the configurations of the active system components. Nonetheless, as the size of the network and stress level increase in GasLib-40, we see that BARON cannot close the gap to obtain the optimal solution. In addition, the perspective strengthening formulation provides improvements of about $40\%$ on average in few instances while the number of budget checked remains the same in most instances. However, the master problem ($P_m$) with the perspective formulation tends to be more difficult to solve as fewer numbers of no-good cuts are added for the same stress level in all three networks.

\begin{sidewaystable}
\centering
{\footnotesize
\begin{tabular}{|c|c|c|c|c|c|c|c|c|c|c|c|c|c|c|c|c|}
\hline
\multirow{2}{*}{Stress} & \multicolumn{4}{c|}{Without perspective strengthening} & \multicolumn{4}{c|}{With perspective strengthening} & \multirow{2}{*}{Imp(\%)} & \multicolumn{4}{c|}{BARON}\\\cline{2-9}\cline{11-14}
& $\overline{C}$ & $\underline{C}$ & Gap & Time (sec.) & $\overline{C}$ & $\underline{C}$ & Gap & Time (sec.) & & $\overline{C}$ & $\underline{C}$ & Gap & Time (sec.) \\\hline
0.1 & \textbf{1.108} & 1.108 & 0.00 & 0.40 & \textbf{1.108} & 1.108 & 0.00 & 0.40 & - & \textbf{1.108} & 1.108 & 0.00 & 0.11 \\\hline
0.5 & \textbf{1.108} & 1.108 & 0.00 & 0.30 & \textbf{1.108} & 1.108 & 0.00 & 0.30 & - & \textbf{1.108} & 1.108 & 0.00 & 0.055 \\\hline
1.0 & \textbf{1.108} & 1.108 & 0.00 & 0.30 & \textbf{1.108} & 1.108 & 0.00 & 0.30 & - & \textbf{1.108} & 1.108 & 0.00 & 0.39 \\\hline
1.5 & 1.792 & 1.417 & 26.46 & - & 1.534 & 1.417 & 8.26 & 77.74 (20\%) & 68.8 & \textbf{1.521} & 1.521 & 0.00 & 1.03 \\\hline
2.0 & 2.127 & 1.417 & 56.46 & - & 1.961 & 1.417 & 38.4 & - & 31.99 & \textbf{1.952} & 1.952 & 0.00 & 1.38 \\\hline
\end{tabular}
\caption{Computational results for GasLib-11} \label{results_gaslib_11_1}

\begin{tabular}{|c|c|c|c|c|c|c|c|c|c|c|c|c|c|c|c|c|}
\hline
\multirow{2}{*}{Stress} & \multicolumn{4}{c|}{Without perspective strengthening} & \multicolumn{4}{c|}{With perspective strengthening} & \multirow{2}{*}{Imp(\%)} & \multicolumn{4}{c|}{BARON}\\\cline{2-9}\cline{11-14}
& $\overline{C}$ & $\underline{C}$ & Gap & Time (sec.) & $\overline{C}$ & $\underline{C}$ & Gap & Time (sec.) & & $\overline{C}$ & $\underline{C}$ & Gap & Time (sec.) \\\hline
0.1 & \textbf{10.28} & 10.28 & 0.00 & 0.69 & \textbf{10.28} & 10.28 & 0.00 & 0.69 & - & \textbf{10.28} & 10.28 & 0.00 & 0.33 \\\hline
0.5 & \textbf{10.28} & 10.28 & 0.00 & 0.56 & \textbf{10.28} & 10.28 & 0.00 & 0.56 & - & \textbf{10.28} & 10.28 & 0.00 & 0.41 \\\hline
1.0 & \textbf{10.28} & 10.28 & 0.00 & 0.55 & \textbf{10.28} & 10.28 & 0.00 & 0.56 & - & \textbf{10.28} & 10.28 & 0.00 & 0.48 \\\hline
1.5 & \textbf{10.34} & 10.34 & 0.00 & 3.50 & \textbf{10.34} & 10.34 & 0.00 & 3.47 & - & \textbf{10.34} & 10.34 & 0.00 & 0.52 \\\hline
2.0 & 12.69 & 10.53 & 20.51 & - & 12.26 & 10.53 & 16.43 & 5091.11 (20\%) & 19.89 & \textbf{11.36} & 11.36 & 0.00 & 13.22 \\\hline
\end{tabular}
\caption{Computational results for GasLib-24} \label{results_gaslib_24_1}

\begin{tabular}{|c|c|c|c|c|c|c|c|c|c|c|c|c|c|c|c|c|}
\hline
\multirow{2}{*}{Stress} & \multicolumn{4}{c|}{Without perspective strengthening} & \multicolumn{4}{c|}{With perspective strengthening} & \multirow{2}{*}{Imp(\%)} & \multicolumn{4}{c|}{BARON}\\\cline{2-9}\cline{11-14}
& $\overline{C}$ & $\underline{C}$ & Gap & Time (sec.) & $\overline{C}$ & $\underline{C}$ & Gap & Time (sec.) & & $\overline{C}$ & $\underline{C}$ & Gap & Time (sec.) \\\hline
0.1 & \textbf{8.43} & 8.43 & 0.00 & 6.50 & \textbf{8.43} & 8.43 & 0.00 & 6.50 & - & \textbf{8.43} & 8.43 & 0.00 & 17.68 \\\hline
0.5 & \textbf{8.43} & 8.43 & 0.00 & 6.50 & \textbf{8.43} & 8.43 & 0.00 & 6.50 & - & \textbf{8.43} & 8.43 & 0.00 & 19.44 \\\hline
1.0 & \textbf{9.56} & 8.45 & 13.14 & 2955.35 (20\%) & \textbf{9.56} & 8.45 & 13.14 & 3019.47 (20\%) & - & \textbf{9.56} & 9.56 & 0.00 & 2651.72 \\\hline
1.5 & \textbf{11.22} & 8.45 & 32.78 & - & \textbf{11.22} & 10.34 & 32.78 & - & - & 11.62 & 10.68 & 8.80 & 17338.74 (20\%) \\\hline
2.0 & \textbf{15.38} & 8.45 & 82.01 & - & \textbf{15.38} & 10.53 & 82.01 & - & - & 17.41 & 11.69 & 48.93 & - \\\hline
\end{tabular}
\caption{Computational results for GasLib-40} \label{results_gaslib_40_1}

}
\end{sidewaystable}

\begin{table}[!htbp]
\centering
\begin{tabular}{|c|c|c|c|c|c|c|}
\hline
\multirow{2}{*}{Stress} & \multicolumn{3}{c|}{Without perspective strengthening} & \multicolumn{3}{c|}{With perspective strengthening} \\\cline{2-7}
& \begin{tabular}{@{}c@{}}Initial \\ budget \end{tabular} & \begin{tabular}{@{}c@{}}Binary \\ search \end{tabular}& \begin{tabular}{@{}c@{}} Primal \\ bound \end{tabular} & \begin{tabular}{@{}c@{}}Initial \\ budget \end{tabular} & \begin{tabular}{@{}c@{}}Binary \\ search \end{tabular}& \begin{tabular}{@{}c@{}} Primal \\ bound \end{tabular} \\\hline
0.1 & Yes & - & - & Yes & - & - \\\hline
0.5 & Yes & - & - & Yes & - & - \\\hline
1.0 & Yes & - & - & Yes & - & - \\\hline
1.5 & No & 12 & 37423 & No & 12 & 31080 \\\hline
2.0 & No & 9 & 36156 & No & 10 & 20257 \\\hline
\end{tabular}
\caption{Iteration information for GasLib-11} \label{results_gaslib_11_2}
\end{table}

\begin{table}[!htbp]
\centering
\begin{tabular}{|c|c|c|c|c|c|c|c|c|c|c|c|c|c|c|}
\hline
\multirow{2}{*}{Stress} & \multicolumn{3}{c|}{Without perspective strengthening} & \multicolumn{3}{c|}{With perspective strengthening} \\\cline{2-6}
& \begin{tabular}{@{}c@{}}Initial \\ budget search\end{tabular}& \begin{tabular}{@{}c@{}}Binary \\ search \end{tabular}& \begin{tabular}{@{}c@{}} Primal \\ bound \end{tabular} & \begin{tabular}{@{}c@{}}Initial \\ budget search\end{tabular}& \begin{tabular}{@{}c@{}}Binary \\ search \end{tabular}& \begin{tabular}{@{}c@{}} Primal \\ bound \end{tabular} \\\hline
0.1 & Yes & - & - & Yes & - & -\\\hline
0.5 & Yes & - & - & Yes & - & -\\\hline
1.0 & Yes & - & - & Yes & - & -\\\hline
1.5 & Yes & - & - & Yes & - & -\\\hline
2.0 & No & 11 & 17873 & No & 11 & 10239 \\\hline
\end{tabular}
\caption{Iteration information for GasLib-24} \label{results_gaslib_24_2}
\end{table}

\begin{table}[!htbp]
\centering
\begin{tabular}{|c|c|c|c|c|c|c|c|c|c|c|c|c|c|c|}
\hline
\multirow{2}{*}{Stress} & \multicolumn{3}{c|}{Without perspective strengthening} & \multicolumn{3}{c|}{With perspective strengthening} \\\cline{2-6}
& \begin{tabular}{@{}c@{}}Initial \\ budget search\end{tabular}& \begin{tabular}{@{}c@{}}Binary \\ search \end{tabular}& \begin{tabular}{@{}c@{}} Primal \\ bound \end{tabular} & \begin{tabular}{@{}c@{}}Initial \\ budget search\end{tabular}& \begin{tabular}{@{}c@{}}Binary \\ search \end{tabular}& \begin{tabular}{@{}c@{}} Primal \\ bound \end{tabular} \\\hline
0.1 & Yes & - & - & Yes & - & -\\\hline
0.5 & Yes & - & - & Yes & - & -\\\hline
1.0 & No & 13 & 596 & No & 13 & 326\\\hline
1.5 & No & 11 & 494 & Yes & 11 & 322\\\hline
2.0 & No & 10 & 302 & No & 10 & 321 \\\hline
\end{tabular}
\caption{Iteration information for GasLib-40} \label{results_gaslib_40_2}
\end{table}

\subsubsection{GasLib-134}
For the GasLib-134 network, we pick nominations 2011-11-27 and 2016-01-11. The results are reported in identical format as the smaller networks in Tables~\ref{results_gaslib_134_n1_1} to~\ref{results_gaslib_134_n2_2} and we use the same definition for percentage gap from (\ref{percentage_gap}). For both nominations, we see that with smaller stress levels, the problems can be directly solved by the initial budget search procedure which outperforms BARON. As the stress levels increase, although BARON returns solutions with optimality gaps, it takes shorter time for BARON to reach 20\% optimality gaps. For this network, however, the perspective strengthening formulation does not show any improvements except in only one instance. Note that the networks are generally independent and although the size of the network is a significant factor in the difficulty of the instances, there are other factors. We observe larger number of budgets checked and no-good cuts added than those in a smaller GasLib-40, but the trend of number of budget checked and no-good cuts across different stress levels is the same as that in the smaller networks.

\begin{sidewaystable}
{\footnotesize
\centering
\begin{tabular}{|c|c|c|c|c|c|c|c|c|c|c|c|c|c|c|c|c|}
\hline
\multirow{2}{*}{Stress} & \multicolumn{4}{c|}{Without perspective strengthening} & \multicolumn{4}{c|}{With perspective strengthening} & \multirow{2}{*}{Imp(\%)} & \multicolumn{4}{c|}{BARON}\\\cline{2-9}\cline{11-14}
& $\overline{C}$ & $\underline{C}$ & Gap & Time (sec.) & $\overline{C}$ & $\underline{C}$ & Gap & Time (sec.) & & $\overline{C}$ & $\underline{C}$ & Gap & Time (sec.) \\\hline
0.1 & \textbf{8.23} & 8.23 & 0.00 & 0.32 & \textbf{8.23} & 8.23 & 0.00 & 0.32 & - & \textbf{8.23} & 8.23 & 0.00 & 7.78 \\\hline
0.5 & \textbf{8.23} & 8.23 & 0.00 & 0.29 & \textbf{8.23} & 8.23 & 0.00 & 0.23 & - & \textbf{8.23} & 8.23 & 0.00 & 7.12 \\\hline
1.0 & \textbf{8.34} & 8.23 & 1.34 & 64.0 (20\%) & \textbf{8.34} & 8.23 & 1.34 & 72.15 (20\%) & - & \textbf{8.34} & 8.34 & 0.00 & 43.54 \\\hline
1.5 & \textbf{9.77} & 8.23 & 18.71 & 2887.02 (20\%) & \textbf{9.77} & 8.23 & 18.71 & 2778.8 (20\%) & - & \textbf{9.77} & 9.77 & 0.00 & 4621.11 \\\hline
2.0 & 12.57 & 8.23 & 52.73 & - & 12.35 & 8.23 & 50.06 & - & 5.06 & \textbf{12.04} & 11.44 & 5.24 & 1291.31 (20\%) \\\hline
\end{tabular}
\caption{Computational results for GasLib-134 and nomination 2011-11-27}\label{results_gaslib_134_n1_1}

\begin{tabular}{|c|c|c|c|c|c|c|c|c|c|c|c|c|c|c|c|c|}
\hline
\multirow{2}{*}{Stress} & \multicolumn{4}{c|}{Without perspective strengthening} & \multicolumn{4}{c|}{With perspective strengthening} & \multirow{2}{*}{Imp(\%)} & \multicolumn{4}{c|}{BARON}\\\cline{2-9}\cline{11-14}
& $\overline{C}$ & $\underline{C}$ & Gap & Time (sec.) & $\overline{C}$ & $\underline{C}$ & Gap & Time (sec.) & & $\overline{C}$ & $\underline{C}$ & Gap & Time (sec.) \\\hline
0.1 & \textbf{8.23} & 8.23 & 0.00 & 0.3 & \textbf{8.23} & 8.23 & 0.00 & 0.35 & - & \textbf{8.23} & 8.23 & 0.00 & 0.41 \\\hline
0.5 & \textbf{8.23} & 8.23 & 0.00 & 0.25 & \textbf{8.23} & 8.23 & 0.00 & 0.28 & - & \textbf{8.23} & 8.23 & 0.00 & 8.28 \\\hline
1.0 & \textbf{8.34} & 8.23 & 1.34 & 0.21 & \textbf{8.34} & 8.23 & 1.34 & 0.23 & - & \textbf{8.34} & 8.34 & 0.00 & 9.69 \\\hline
1.5 & \textbf{9.68} & 8.23 & 17.62 & 2768.39 (20\%) & \textbf{9.68} & 8.23 & 17.62 & 2775.2 (20\%) & - & 9.69 & 9.41 & 2.98 & 180.28 (20\%) \\\hline
2.0 & \textbf{12.72} & 8.23 & 54.56 & - & \textbf{12.72} & 8.23 & 54.56 & - & - & 12.86 & 12.11 & 6.19 & 3542.7 (20\%) \\\hline
\end{tabular}
}
\caption{Computational results for GasLib-134 and nomination 2016-01-11}\label{results_gaslib_134_n2_1}
\end{sidewaystable}

\begin{table}[!htbp]
\centering
\begin{tabular}{|c|c|c|c|c|c|c|c|c|c|c|c|c|c|c|}
\hline
\multirow{2}{*}{Stress} & \multicolumn{3}{c|}{Without perspective strengthening} & \multicolumn{3}{c|}{With perspective strengthening} \\\cline{2-6}
& \begin{tabular}{@{}c@{}}Initial \\ budget search\end{tabular}& \begin{tabular}{@{}c@{}}Binary \\ search \end{tabular}& \begin{tabular}{@{}c@{}} Primal \\ bound \end{tabular} & \begin{tabular}{@{}c@{}}Initial \\ budget search\end{tabular}& \begin{tabular}{@{}c@{}}Binary \\ search \end{tabular}& \begin{tabular}{@{}c@{}} Primal \\ bound \end{tabular} \\\hline
0.1 & Yes & - & - & Yes & - & -\\\hline
0.5 & Yes & - & - & Yes & - & -\\\hline
1.0 & No & 15 & 10681 & No & 15 & 10152\\\hline
1.5 & No & 11 & 9857 & Yes & 11 & 4325\\\hline
2.0 & No & 11 & 8760 & No & 9 & 3511 \\\hline
\end{tabular}
\caption{Iteration information for GasLib-134 and nomination 2011-11-27} \label{results_gaslib_134_n1_2}
\end{table}

\begin{table}[!htbp]
\centering
\begin{tabular}{|c|c|c|c|c|c|c|c|c|c|c|c|c|c|c|}
\hline
\multirow{2}{*}{Stress} & \multicolumn{3}{c|}{Without perspective strengthening} & \multicolumn{3}{c|}{With perspective strengthening} \\\cline{2-6}
& \begin{tabular}{@{}c@{}}Initial \\ budget search\end{tabular}& \begin{tabular}{@{}c@{}}Binary \\ search \end{tabular}& \begin{tabular}{@{}c@{}} Primal \\ bound \end{tabular} & \begin{tabular}{@{}c@{}}Initial \\ budget search\end{tabular}& \begin{tabular}{@{}c@{}}Binary \\ search \end{tabular}& \begin{tabular}{@{}c@{}} Primal \\ bound \end{tabular} \\\hline
0.1 & Yes & - & - & Yes & - & -\\\hline
0.5 & Yes & - & - & Yes & - & -\\\hline
1.0 & Yes & - & - & Yes & - & -\\\hline
1.5 & No & 12 & 10046 & Yes & 12 & 4358\\\hline
2.0 & No & 11 & 8962 & No & 11 & 3512 \\\hline
\end{tabular}
\caption{Iteration information for GasLib-134 and nomination 2016-01-11} \label{results_gaslib_134_n2_2}
\end{table}

\subsubsection{GasLib-582}
For GasLib-582 network, we pick nominations warm\_31, mild\_3838, cool\_2803, cold\_4218, and freezing\_188. In this set of experiments, we discover that BARON is able to obtain a lower bound and constantly improve it, but it tends to be slower in finding a feasible solution to close the gap after hours of computation. On the other hand, our framework is very efficient in finding feasible solutions. As a result, we decide to run BARON for 45 minutes before the binary search phase, i.e., before step~\ref{binary_search_overall_procedure} in Algorithm~\ref{overall_procedure} and we report the better lower bound $\underline{C}$ between our framework and BARON. 

We first briefly recap the approach proposed in~\cite{borraz-Sanchez2015} where the authors construct a convex mixed-integer second-order cone (MISOC) relaxation of the reinforcement formulation, and fix all the binary decision variables after solving the relaxation to obtain a nomination validation problem. The resulting nonlinear program is then solved by a solver to obtain a solution if the nonlinear program is feasible. If the nonlinear program is infeasible, then the authors propose to use any feasible solutions to the relaxation. We adapt a similar MISOC relaxation for the pipes and resistors. We leave the details of this relaxation in Appendix~\ref{sec:appendix_b}. In our computations, following the procedure and solving the nomination validation problem did not yield feasible solutions for nomination warm\_31 with any stress levels. 

Next, we present the results from our framework in Tables~\ref{results_gaslib_582_warm_31} to~\ref{results_gaslib_582_freezing_188}. We combine the computational results and the iteration information into one table in this section. In the ``Iteration'' column, the first number is the number of budget checked and the second number is the number of no-good cuts added. Everything else follows identical format as discussed previously. Additionally, Table~\ref{gap_results} summarizes the gaps across different nominations and stress levels.

From the results, we see that our framework is able to find a feasible budget for all 25 instances. In particular, it provides an optimal budget for twelve instances and a budget with less than $20\%$ gaps for another six instances.  There are a few instances where we reached the time limit with large gaps. We mark these instances in bold. These instances are with higher stress levels and/or worse temperature conditions. As we increase the stress level and/or deteriorate the temperature conditions (from warm to freezing) making the nominations more demanding, we observe that it becomes more difficult to find feasible solutions in the primal bound loops to prove a feasible budget and thus close the gap by binary search. The primal bound loops hit the time limit much more often. In addition, for all instances, we are not able to prove infeasible budget from the primal bound loop. While a large number of binary solutions are feasible to the master problem ($P_m$), each integer no-good cut only invalidates one of them. As a result, the lower bounds on budget, $\underline{C}$, are almost the same across different nominations and stress levels. 

Furthermore, the perspective strengthening is shown to be effective in closing the gaps for higher stress levels. There is no instance for which the implementation without perspective strengthening achieves better gaps than the implementation with perspective strengthening. The average and largest improvements from perspective strengthening are about $55\%$ (excluding the instances that are solved to optimality both with and without perspective strengthening) and $97\%$, respectively. The improvements are all due to obtaining better feasible solutions. 

Similar to the results on other GasLib networks, we see that, with more demanding nominations and larger stress levels, fewer budget are checked and no-good cuts added. However, for instances with larger gaps (for example, those that are marked in bold), many diameter choices and configurations of the active system components can be evaluated relatively fast to be infeasible in the subproblem ($P_s$) resulting in large number of no-good cuts added for every budget checked. Overall, the initial budget search procedure is effective as some of these challenging instances are solved with the procedure while a lower bound is provided to other instances to assess the quality of the feasible solutions obtained by the primal bound loop.

\begin{sidewaystable}
{\footnotesize
\centering
\begin{tabular}{|c|c|c|c|c|c|c|c|c|c|c|c|c|c|c|c|c|c|c|}
\hline
\multirow{2}{*}{Stress} & \multicolumn{6}{c|}{Without perspective strengthening} & \multicolumn{6}{c|}{With perspective strengthening} & \multirow{2}{*}{Imp(\%)} \\\cline{2-13}
& $\overline{C}$ & $\underline{C}$ & Gap & Time (sec.) & \begin{tabular}{@{}c@{}}Initial \\ budget \end{tabular} & Iteration & $\overline{C}$ & $\underline{C}$ & Gap & Time (sec.) & \begin{tabular}{@{}c@{}}Initial \\ budget \end{tabular}& Iteration \\\hline
0.1 & 12.59 & 12.59 & 0.00 & 1.39 & Yes & -(-) & 12.59 & 12.59 & 0.00 & 1.41 & Yes & -(-) & -\\\hline
0.5 & 12.59 & 12.59 & 0.00 & 1.22 & Yes & -(-) & 12.59 & 12.59 & 0.00 & 1.11 & Yes & -(-) & -\\\hline
1.0 & 12.87 & 12.59 & 2.22 & 2102.7 (20\%) & No & 12(303) & 12.59 & 12.59 & 0.06 & 1397.14 (20\%) & No & 18(615) & 97\\\hline
1.5 & 14.65 & 12.77 & 14.72 & 7589.99 (20\%) & No & 10(306) & 13.77 & 12.77 & 7.83 & 6222.75 (20\%) & No & 9(255) & 47\\\hline
2.0 & 19.22 & 13.40 & 43.43 & - & No & 9(458) & 17.40 & 13.40 & 29.85 & - & No & 10(287) & 31\\\hline
\end{tabular}
\caption{Computational results for GasLib-582 and nomination warm\_31}\label{results_gaslib_582_warm_31}

\begin{tabular}{|c|c|c|c|c|c|c|c|c|c|c|c|c|c|c|c|c|c|c|}
\hline
\multirow{2}{*}{Stress} & \multicolumn{6}{c|}{Without perspective strengthening} & \multicolumn{6}{c|}{With perspective strengthening} & \multirow{2}{*}{Imp(\%)} \\\cline{2-13}
& $\overline{C}$ & $\underline{C}$ & Gap & Time (sec.) & \begin{tabular}{@{}c@{}}Initial \\ budget \end{tabular} & Iteration & $\overline{C}$ & $\underline{C}$ & Gap & Time (sec.) & \begin{tabular}{@{}c@{}}Initial \\ budget \end{tabular}& Iteration \\\hline
0.1 & 12.59 & 12.59 & 0.00 & 1.69 & Yes & -(-) & 12.59 & 12.59 & 0.00 & 1.68 & Yes & -(-) & -\\\hline
0.5 & 12.59 & 12.59 & 0.00 & 1.68 & Yes & -(-) & 12.59 & 12.59 & 0.00 & 1.68 & Yes & -(-) & -\\\hline
1.0 & 12.59 & 12.59 & 0.00 & 1.59 & Yes & -(-) & 12.59 & 12.59 & 0.00 & 1.48 & Yes & -(-) & -\\\hline
1.5 & 18.78 & 13.18 & 42.49 & - & No & 9(376) & 15.54 & 13.18 & 17.91 & 2113.41(20\%) & No & 8(268) & 58 \\\hline
2.0 & 28.32 & 13.51 & \textbf{109.62} & - & No & 9(554) & 21.43 & 13.51 & 58.62 & - & No & 8(269) & 47 \\\hline
\end{tabular}
\caption{Computational results for GasLib-582 and nomination mild\_3838}\label{results_gaslib_582_mild_3838}

\begin{tabular}{|c|c|c|c|c|c|c|c|c|c|c|c|c|c|c|c|c|c|c|}
\hline
\multirow{2}{*}{Stress} & \multicolumn{6}{c|}{Without perspective strengthening} & \multicolumn{6}{c|}{With perspective strengthening} & \multirow{2}{*}{Imp(\%)} \\\cline{2-13}
& $\overline{C}$ & $\underline{C}$ & Gap & Time (sec.) & \begin{tabular}{@{}c@{}}Initial \\ budget \end{tabular} & Iteration & $\overline{C}$ & $\underline{C}$ & Gap & Time (sec.) & \begin{tabular}{@{}c@{}}Initial \\ budget \end{tabular}& Iteration \\\hline
0.1 & 12.59 & 12.59 & 0.00 & 1.39 & Yes & -(-) & 12.59 & 12.59 & 0.00 & 1.34 & Yes & -(-) & -\\\hline
0.5 & 12.59 & 12.59 & 0.00 & 3.11 & Yes & -(-) & 12.59 & 12.59 & 0.00 & 2.82 & Yes & -(-) & -\\\hline
1.0 & 12.65 & 12.59 & 4.77 & 3875.47 (20\%) & No & 14(371) & 12.62 & 12.59 & 2.38 & 3977.63 (20\%) & No & 15(461) & 50\\\hline
1.5 & 14.65 & 12.74 & 14.99 & 8235.91 (20\%) & No & 9(297) & 13.16 & 12.74 & 3.29 & 7122.24 (20\%) & No & 11(271) & 78 \\\hline
2.0 & 38.54 & 13.61 & \textbf{183.17} & - & No & 8(500) & 17.31 & 13.61 & 27.19 & - & No & 9(252) & 85 \\\hline
\end{tabular}
}
\caption{Computational results for GasLib-582 and nomination cool\_2803}\label{results_gaslib_582_cool_2803}
\end{sidewaystable}

\begin{sidewaystable}
{\footnotesize
\centering
\begin{tabular}{|c|c|c|c|c|c|c|c|c|c|c|c|c|c|c|c|c|c|c|}
\hline
\multirow{2}{*}{Stress} & \multicolumn{6}{c|}{Without perspective strengthening} & \multicolumn{6}{c|}{With perspective strengthening} & \multirow{2}{*}{Imp(\%)} \\\cline{2-13}
& $\overline{C}$ & $\underline{C}$ & Gap & Time (sec.) & \begin{tabular}{@{}c@{}}Initial \\ budget \end{tabular} & Iteration & $\overline{C}$ & $\underline{C}$ & Gap & Time (sec.) & \begin{tabular}{@{}c@{}}Initial \\ budget \end{tabular}& Iteration \\\hline
0.1 & 12.59 & 12.59 & 0.00 & 2.73 & Yes & -(-) & 12.59 & 12.59 & 0.00 & 2.62 & Yes & -(-) & -\\\hline
0.5 & 12.59 & 12.59 & 0.00 & 1.8 & Yes & -(-) & 12.59 & 12.59 & 0.00 & 1.83 & Yes & -(-) & -\\\hline
1.0 & 13.77 & 12.82 & 7.41 & 5157.15 (20\%) & No & 10(276) & 13.03 & 12.82 & 1.64 & 4642.22 (20\%) & No & 11(274) & 78 \\\hline
1.5 & 48.77 & 13.70 & \textbf{255.99} & - & No & 7(396) & 22.89 & 13.70 & \textbf{67.08} & - & No & 8(252) & 74\\\hline
2.0 & 37.76 & 16.24 & \textbf{132.51} & - & No & 9(462) & 33.04 & 16.24 & \textbf{103.45} & - & No & 9(276) & 22\\\hline
\end{tabular}
\caption{Computational results for GasLib-582 and nomination cold\_4128}\label{results_gaslib_582_cold_4218}

\begin{tabular}{|c|c|c|c|c|c|c|c|c|c|c|c|c|c|c|c|c|c|c|}
\hline
\multirow{2}{*}{Stress} & \multicolumn{6}{c|}{Without perspective strengthening} & \multicolumn{6}{c|}{With perspective strengthening} & \multirow{2}{*}{Imp(\%)} \\\cline{2-13}
& $\overline{C}$ & $\underline{C}$ & Gap & Time (sec.) & \begin{tabular}{@{}c@{}}Initial \\ budget \end{tabular} & Iteration & $\overline{C}$ & $\underline{C}$ & Gap & Time (sec.) & \begin{tabular}{@{}c@{}}Initial \\ budget \end{tabular}& Iteration \\\hline
0.1 & 12.59 & 12.59 & 0.00 & 1.27 & Yes & -(-) & 12.59 & 12.59 & 0.00 & 1.18 & Yes & -(-) & -\\\hline
0.5 & 12.59 & 12.59 & 0.00 & 1.13 & Yes & -(-) & 12.59 & 12.59 & 0.00 & 1.25 & Yes & -(-) & -\\\hline
1.0 & 17.23 & 14.22 & 21.16 & - & No & 9(370) & 14.95 & 14.22 & 5.13 & 9841.49 (20\%) & No & 8(249) & 76 \\\hline
1.5 & 35.39 & 14.92 & \textbf{137.20} & - & No & 8(428) & 35.39 & 14.92 & \textbf{137.20} & - & No & 8(284) & - \\\hline
2.0 & 56.63 & 17.75 & \textbf{219.04} & - & No & 9(636) & 47.19 & 17.75 & \textbf{165.86} & - & No & 8(367) & 32\\\hline
\end{tabular}
\caption{Computational results for GasLib-582 and nomination freezing\_188}\label{results_gaslib_582_freezing_188}

\begin{tabular}{|c||c|c||c|c||c|c||c|c||c|c|}
\hline	
\multirow{2}{*}{Stress} & \multicolumn{2}{c||}{warm\_31} & \multicolumn{2}{c||}{mild\_3838} & \multicolumn{2}{c||}{cool\_2803} & \multicolumn{2}{c||}{cold\_4218} & \multicolumn{2}{c|}{freezing\_188}\\\cline{2-11}
& w/o & w & w/o & w & w/o & w & w/o & w & w/o & w \\\hline\hline
0.1 & 0.00 & 0.00 & 0.00 & 0.00 & 0.00 & 0.00 & 0.00 & 0.00 & 0.00 & 0.00\\\hline
0.5 & 0.00 & 0.00 & 0.00 & 0.00 & 0.00 & 0.00 & 0.00 & 0.00 & 0.00 & 0.00\\\hline
1.0 & 2.22 & 0.06 & 0.00 & 0.00 & 4.77 & 2.38 & 7.41 & 1.64 & 21.16 & 5.13\\\hline
1.5 & 14.72 & 7.83 & 42.49 & 17.91 & 14.99 & 3.29 & \textbf{255.99} & \textbf{67.08} & \textbf{137.20} & \textbf{137.20} \\\hline
2.0 & 43.43 & 29.85 & \textbf{109.62} & 58.62 & \textbf{183.17} & 27.19 & \textbf{132.51} & \textbf{103.45} & \textbf{219.04} & \textbf{165.86}\\\hline
\end{tabular}
}
\caption{Gap in (\%) without (w/o) and with (w) perspective strengthening} \label{gap_results}
\end{sidewaystable}

\section{Conclusion} \label{sec:conclusion}

In conclusion, we studied the gas network design problem, where diameter choices of pipes and active system component configurations are decided. We proposed a decomposition framework to solve the problem. In particular, in the primal bound loop of the framework, for a given budget, we modify a convex NLP formulation to construct master problems to obtain favorable diameter choices and active system component configurations, and validate their feasibility in the subproblem. Binary search is performed as an outer loop to minimize the budget. We also proposed a procedure to obtain a good initial budget for the binary search. The proposed framework was tested on the various GasLib networks and instances were created from combining nominations under different temperature conditions and stress level multipliers. The computational results show that the framework is effective in solving most instances, especially for the large GasLib-582 network, when combined with BARON for obtaining lower bounds.

There are a few future directions that could be explored. 
The cost of operating the network may be an interest from an operator's perspective. Our framework can be adapted and applied to incorporate the cost of operations with simple modifications. 


\section*{Acknowledgement}
This work was conducted as part of the Institute for the Design of Advanced Energy Systems (IDAES) with support through the Simulation-Based Engineering, Crosscutting Research Program and the Solid Oxide Fuel Cell Program’s Integrated Energy Systems thrust within the U.S. Department of Energy’s Office of Fossil Energy and Carbon Management.

\section*{Appendix A: Proof of Theorem \ref{CVXNLP_network_eqv}} \label{sec:appendix_a}
\begin{proof}
We first consider the if part. Suppose that $(\hat{q}^+, \hat{q}^-, \hat{\lambda}, \hat{\mu}^+, \hat{\mu}^-)$ solves (CVXNLP). Consider the first-order stationary conditions for (CVXNLP) as follows:
\begin{align}
	& \phi(\hat{q}_a^+) - \hat{\mu}_a^+ - \hat{\lambda}_v + \hat{\lambda}_w = 0, \qquad a = (v,w) \in A_p, \label{thm1_opt_cond_1}\\
	& \phi(\hat{q}_a^-) - \hat{\mu}_a^- + \hat{\lambda}_v - \hat{\lambda}_w = 0, \qquad a = (v,w) \in A_p, \label{thm1_opt_cond_2}\\
	& \hat{q}_a^+, \hat{\mu}_a^+ \ge 0, \qquad a = (v,w) \in A_p, \\
	& \hat{q}_a^+ \cdot \hat{\mu}_a^+ = 0, \qquad a = (v,w) \in A_p, \\
	& \hat{q}_a^-, \hat{\mu}_a^- \ge 0, \qquad a = (v,w) \in A_p, \\
	& \hat{q}_a^- \cdot \hat{\mu}_a^- = 0, \qquad a = (v,w) \in A_p, \\
	& \sum_{a \in A_\text{in}(v)} (\hat{q}_a^+ - \hat{q}_a^-) - \sum_{a \in A_\text{out}(v)} (\hat{q}_a^+ - \hat{q}_a^-) = d_v, \; v \in \mathcal{V}.
\end{align}
First, it cannot happen that $\hat{q}_a^+, \hat{q}_a^- > 0$ for any $a \in A_p$, otherwise, we can define 
\begin{equation}
\tilde{q}_a^+ = \max\{\hat{q}_a^+ - \hat{q}_a^-, 0\}, \; \tilde{q}_a^- = \max\{0, \hat{q}_a^- - \hat{q}_a^+\},	
\end{equation}
where $\tilde{q}_a^+ \le \hat{q}_a^+$ and $\tilde{q}_a^- \le \hat{q}_a^-$. The new flow values $\tilde{q}_a^+$ and $\tilde{q}_a^-$ are feasible and because of the strict monotonicity of $\phi(\cdot)$, they result in a smaller objective value which contradicts the optimality of $\hat{q}^+$ and $\hat{q}^-$. Furthermore, the complementary slackness conditions imply that, if $\hat{q}_a^+ \; (\text{or} \; \hat{q}_a^-) > 0$, then $\hat{\mu}_a^+ \; (\text{or} \; \hat{\mu}_a^-) = 0$. If $\hat{q}_a^+ = \hat{q}_a^- = 0$ for some $a$, then adding (\ref{thm1_opt_cond_1}) and (\ref{thm1_opt_cond_2}) gives
\begin{equation}
\hat{\mu}_a^+ + \hat{\mu}_a^- = 0 \implies \hat{\mu}_a^+ = \hat{\mu}_a^- = 0.
\end{equation}

Consequently, we can simplify (\ref{thm1_opt_cond_1}) and (\ref{thm1_opt_cond_2}) by differentiating the cases on $q_a^+$ and $q_a^-$ to be 
\begin{align}
	& \phi(\hat{q}_a^+) - \hat{\lambda}_v + \hat{\lambda}_w = 0, \qquad a = (v,w) : \hat{q}_a^+ > 0,\\
	& \phi(\hat{q}_a^-)  + \hat{\lambda}_v - \hat{\lambda}_w = 0, \qquad a = (v,w): \hat{q}_a^- > 0,\\
	& \hat{\lambda}_v - \hat{\lambda}_w = 0, \qquad a = (v,w): \hat{q}_a^+ = \hat{q}_a^- = 0.
\end{align}

Now define $(\pi, q)$ as 
\begin{align}
& \pi_v = \hat{\lambda}_v, \qquad v \in \mathcal{V},\\
& q_a = \hat{q}_a^+ - \hat{q}_a^- , \qquad a \in A_p,
\end{align}
and we see $(\pi, q)$ satisfies the network analysis equations. 

Now we consider the only if part. Suppose that $(\pi, q)$ solves the network analysis equations. We define the following:
\begin{align}
& \hat{q}_a^+ = \max\{0, q_a\}, \qquad a \in A_p, \\
& \hat{q}_a^- = \vert \min\{0, q_a\} \vert, \qquad a \in A_p, \\
& \hat{\lambda}_v = \pi_v, \qquad v \in \mathcal{V}, \\
& \hat{\mu}_a^+ = \max\{0, \pi_w - \pi_v + \phi(\hat{q}_a^+)\}, \qquad a = (v, w) \in A_p, \\
& \hat{\mu}_a^- = \max\{0, \pi_v - \pi_w + \phi(\hat{q}_a^-)\}, \qquad a = (v, w) \in A_p.
\end{align}

Then $(\hat{q}^+, \hat{q}^-, \hat{\lambda}, \hat{\mu}^+, \hat{\mu}^-)$ satisfies the first-order stationary conditions. To see this, we first verify that, when $q_a \ge 0$, then $\hat{q}_a^+ = q_a \ge 0$ and $\hat{q}_a^ - = 0$. From the potential loss equation (\ref{network_ana_potential_drop}) in network analysis equations, we have that: 
	$\pi_v - \pi_w = \phi(q_a) = \phi(\hat{q}_a^+)$. Consequently, we have
\begin{align}
	\hat{\mu}_a^+ & = \max\{0, \pi_w - \pi_v + \phi(\hat{q}_a^+)\} = 0, \\
	\hat{\mu}_a^- & = \max\{0, \pi_v - \pi_w + \phi(\hat{q}_a^-)\} = \max\{0, \phi(\hat{q}_a^+) + \phi(0)\} = \phi(\hat{q}_a^+) \ge 0,
\end{align}
and 
\begin{align}
	& \phi(\hat{q}_a^+) - \hat{\mu}_a^+ - \hat{\lambda}_v + \hat{\lambda}_w = \phi(\hat{q}_a^+) - 0  - \pi_v + \pi_w = 0,\\
	& \phi(\hat{q}_a^-) - \hat{\mu}_a^- + \hat{\lambda}_v - \hat{\lambda}_w = \phi(0) - \phi(\hat{q}_a^+) + \pi_v - \pi_w = 0.
\end{align}

Similarly, we can verify for $q_a < 0$. Furthermore, the strict monotonically increasing property of $\phi(\cdot)$ implies the convexity of $\Phi(\cdot)$. The constraints in (CVXNLP) are linear and thus (CVXNLP) is convex. The satisfaction of the first-order stationary conditions is necessary and sufficient for $(\pi, q)$ to be an optimal solution to (CVXNLP) and it is the unique optimal solution due to the convexity.
\end{proof}

\section*{Appendix B: Mixed-integer second-order conic (MISOC) relaxation} \label{sec:appendix_b}
The relaxations are constructed for the pipes and resistors. For the pipes, instead of decomposing the flow variables $q_{a,i}$ into $q_{a,i}^+$ and $q_{a,i}^-$, we define two binary variables $x_{a}^+$ and $x_a^-$ for the flow directions and enforce $x_a^+ + x_a^- = 1$. If $x_a^+ = 1$, then $q_{a,i} \ge 0$ and if $x_a^- = 1$, then $q_{a,i} < 0$. In addition, we create multiple potential variables $\pi_{v,i}$ and $\pi_{w,i}$ for $a = (v,w) \in A_p$ and $i \in [n]$. Now consider a pipe $a = (v,w)$ and a diameter choice $i$, we can write the potential loss as
\begin{equation}
	(x_a^+ - x_a^-)(\pi_{v,i} - \pi_{w,i}) = \alpha_{a,i}q_{a,i}^2. \label{MISOC_1}
\end{equation}

The left-hand side of (\ref{MISOC_1}) is bilinear. If we define $\gamma_{a,i} = (x_a^+ - x_a^-)(\pi_{v,i} - \pi_{w,i})$, we can write the standard McCormick relaxation for $\gamma_{a,i} = (x_a^+ - x_a^-)(\pi_{v,i} - \pi_{w,i})$ by
\begin{align}
	& \gamma_{a,i} \ge \pi_{w,i} - \pi_{v,i} + (\pi_v^\text{min} - \pi_w^\text{max})(x_a^+ - x_a^- + 1),\\
	& \gamma_{a,i} \ge \pi_{v,i} - \pi_{w,i} + (\pi_v^\text{max} - \pi_w^\text{min})(x_a^+ - x_a^- - 1),\\
	& \gamma_{a,i} \ge \pi_{w,i} - \pi_{v,i} + (\pi_v^\text{max} - \pi_w^\text{min})(x_a^+ - x_a^- + 1),\\
	& \gamma_{a,i} \ge \pi_{v,i} - \pi_{w,i} + (\pi_v^\text{min} - \pi_w^\text{max})(x_a^+ - x_a^- - 1).
\end{align}

With $\gamma_{a,i}$ defined, constraint (\ref{MISOC_1}) can be written as $\gamma_{a,i} = \alpha_{a,i}q_{a,i}^2$ and can be further relaxed to become convex as follows: 
\begin{equation}
	\gamma_{a,i} \ge \alpha_{a,i}q_{a,i}^2.
\end{equation}

Applying perspective strengthening to the relaxed constraint gives
\begin{equation}
	z_{a,i} \gamma_{a,i} \ge \alpha_{a,i}q_{a,i}^2.
\end{equation}

Now the potential loss constraint (\ref{compact_constr_pipe_2}) for pipes becomes 
\begin{equation}
	\pi_v - \pi_w = \sum_{i \in [n]} \gamma_{a,i}. 
\end{equation}

We can create similar relaxations for the resistors. For a resistor $a = (v,w) \in A_r$, we have
\begin{align}
	& \gamma_{a} \ge \pi_{w} - \pi_{v} + (\pi_v^\text{min} - \pi_w^\text{max})(x_a^+ - x_a^- + 1),\\
	& \gamma_{a} \ge \pi_{v} - \pi_{w} + (\pi_v^\text{max} - \pi_w^\text{min})(x_a^+ - x_a^- - 1),\\
	& \gamma_{a} \ge \pi_{w} - \pi_{v} + (\pi_v^\text{max} - \pi_w^\text{min})(x_a^+ - x_a^- + 1),\\
	& \gamma_{a} \ge \pi_{v} - \pi_{w} + (\pi_v^\text{min} - \pi_w^\text{max})(x_a^+ - x_a^- - 1),\\
	& \gamma_a \ge \alpha_a q_a^2.
\end{align}

Additionally, the binary variables $x_a^\text{dir}$ in constraints (\ref{compact_constr_pipe_3})-(\ref{compact_constr_pipe_4}) and (\ref{compact_constr_resistor_2})-(\ref{compact_constr_resistor_3}) that govern flow limits on directions are replaced by $x_a^+$ and $x_a^-$ correspondingly. We keep the rest of constraints unchanged and obtain a convex MISOC relaxation of the design problem as a result.

\bibliographystyle{plain}
\bibliography{gasnetworkopt}

\begin{thebibliography}{10}

\bibitem{Achterberg2009}
Tobias Achterberg.
\newblock Scip: solving constraint integer programs.
\newblock {\em Mathematical Programming Computation}, 1(1):1--41, 2009.

\bibitem{mosek}
MOSEK ApS.
\newblock {\em The MOSEK optimization toolbox for Python manual. Version
  9.3.10.}, 2022.

\bibitem{babonneau2012}
Frederic Babonneau, Yurii Nesterov, and Jean-Philippe Vial.
\newblock Design and operations of gas transmission networks.
\newblock {\em Operations Research}, 60(1):34--47, 2012.

\bibitem{borraz-Sanchez2015}
Conrado Borraz-Sanchez, Russell Bent, Soctt Backhaus, Hassan Hijazi, and
  Pascal~Van Hentenryck.
\newblock Convex relaxations for gas expansion planning.
\newblock {\em INFORMS Journal on Computing}, 28:645--656, 2016.

\bibitem{bragalli2008}
Cristana Bragalli, Claudia D'Ambrosio, John Lee, Andrea Lodi, and Paolo Toth.
\newblock Water network design by minlp.
\newblock {\em IBM Research Report}, 2008.
\newblock RC24495 (W0802-056).

\bibitem{10.1007/11841036_62}
Cristiana Bragalli, Claudia D'Ambrosio, Jon Lee, Andrea Lodi, and Paolo Toth.
\newblock An minlp solution method for a water network problem.
\newblock In Yossi Azar and Thomas Erlebach, editors, {\em Algorithms -- ESA
  2006}, pages 696--707, Berlin, Heidelberg, 2006. Springer Berlin Heidelberg.

\bibitem{bragalli2012}
Cristiana Bragalli, Claudia D'Ambrosio, Jon Lee, Andrea Lodi, and Paolo Toth.
\newblock On the optimal design of water distribution networks:a practical
  minlp approach.
\newblock {\em Optim Eng (2012) 13:219-246}, 2012.

\bibitem{burlacu2017}
Robert Burlacu, Bjorn Geissler, and Lars Schewe.
\newblock Solving mixed-integer nonlinear programs using adaptively refined
  mixed-integer linear programmes.
\newblock {\em Optimization Methods and Software}, 35(1):37--64, 2020.

\bibitem{cherry1951}
C.~Cherry.
\newblock Some general theorems for non-linear systems possessing reactance.
\newblock {\em Philos. Mag.}, 42(7):1161--1177, 1951.

\bibitem{collins197778}
M.~Collins, L.~Cooper, R.~Helgason, J.~Kennington, and L.~LeBlanc.
\newblock Solving the pipe network analysis problem using optimization
  techniques.
\newblock {\em Management Science}, 24(7):747--760, 1977/78.

\bibitem{dambrosio2015}
Claudia D'Ambrosio, Andrea Lodi, Sven Wiese, and Cristana Bragalli.
\newblock Mathematical programming techniques in water network optimization.
\newblock {\em European Journal of Opeartional Research}, 243:774--788, 2015.

\bibitem{eia_2020_demand}
U.S. EIA.
\newblock Natural gas explained.
\newblock
  \url{https://www.eia.gov/energyexplained/natural-gas/natural-gas-pipelines.php},
  2021.
\newblock Accessed: 04-21-2022.

\bibitem{frangioni2006}
A.~Frangioni and C.~Gentile.
\newblock Perspective cuts for a class of convex 0-1 mixed integer programs.
\newblock {\em Math. Program.}, 106:225--236, 2006.

\bibitem{GamrathEtal2020OO}
Gerald Gamrath, Daniel Anderson, Ksenia Bestuzheva, Wei-Kun Chen, Leon Eifler,
  Maxime Gasse, Patrick Gemander, Ambros Gleixner, Leona Gottwald, Katrin
  Halbig, Gregor Hendel, Christopher Hojny, Thorsten Koch, Pierre Le~Bodic,
  Stephen~J. Maher, Frederic Matter, Matthias Miltenberger, Erik M{\"u}hmer,
  Benjamin M{\"u}ller, Marc~E. Pfetsch, Franziska Schl{\"o}sser, Felipe
  Serrano, Yuji Shinano, Christine Tawfik, Stefan Vigerske, Fabian Wegscheider,
  Dieter Weninger, and Jakob Witzig.
\newblock The scip optimization suite 7.0.
\newblock Technical report, Optimization Online, March 2020.

\bibitem{geibler2015}
B~Geibler, A.~Morsi, L.~Schewe, and Martin Schmidt.
\newblock Solving power-constrained gas transportation problems using an
  mip-based alternating direction method.
\newblock {\em Computers \& Chemical Engineering}, 82:303--317, 2015.

\bibitem{geibler2017}
B~Geibler, A.~Morsi, L.~Schewe, and Martin Schmidt.
\newblock Solving highly detailed gas transport minlps: Block separability and
  penalty alternating direction methods.
\newblock {\em INFORMS Journal on Computing}, 30(2):309--323, 2018.

\bibitem{oktay2009}
Oktay G\"unl\"uk and Jeff Linderoth.
\newblock Perspective reformulation of mixed integer nonlinear programs with
  indicator variables.
\newblock {\em Math. Program.}, 124:183--205, 2010.

\bibitem{gurobi}
LLC Gurobi~Optimization.
\newblock Gurobi optimizer reference manual, 2022.

\bibitem{Hante_2023}
Falk~M. Hante and Martin Schmidt.
\newblock Gas transport network optimization: Mixed-integer nonlinear models.
\newblock {\em Optimization-online}, 2023.

\bibitem{humpola2014}
Jesco Humpola.
\newblock Gas network optimization by minlp.
\newblock 2014.
\newblock PhD Dissertation.

\bibitem{ks:18}
A.~Khajavirad and N.~V. Sahinidis.
\newblock {A hybrid LP/NLP paradigm for global optimization relaxations}.
\newblock {\em Mathematical Programming Computation}, 10:383--421, 2018.

\bibitem{koch2015}
Thorsten Koch, Benjamin Hiller, Marc~E. Pfetsch, and Lars Schewe, editors.
\newblock {\em Evaluating Gas Network Capacities}.
\newblock Society for Industrial and Applied Mathematics, Philadelphia, PA,
  2015.

\bibitem{liers2020}
Frauke Liers, Alexander Martin, Maximilian Merkert, Nick Mertens, and Dennis
  Michaels.
\newblock Solving mixed-integer nonlinear optimization problems using
  simultaneous convexification--a case study for gas networks.
\newblock {\em Journal of Global Optimization}, 80:307--340, 2021.

\bibitem{pfetsch2015}
Marc~E. Pfetsch, Armin Fugenschuh, Bjorn Geibler, Nina Geibler, Ralf Gollmer,
  Benjamin Hiller, Jesco Humpola, Thorsten Koch, Thomas Lehmann, Alexander
  Martin, Antonio Morsi, Jessica Rovekamp, Lars Schewe, Martin Schmidt, Rudiger
  Schultz, Robert Schwarz, Jonas Schweiger, Claudia Stangl, Marc~C. Steinbach,
  Stefan Vigerske, and Bernhard~M. Willert.
\newblock Validation of nominations in gas network optimization: models,
  methods, and solutions.
\newblock {\em Optimization Methods and Software}, 30(1):15--53, 2015.

\bibitem{raghunathan2013}
Arvind~U. Raghunathan.
\newblock Global optimization of nonlinear network design.
\newblock {\em SIAM J. OPTIM}, 23(1):268--295, 2013.

\bibitem{rio-Mercado2015}
Roger~Z Rio-Mercado and Conrado Borraz-Sanchez.
\newblock Optimization problems in natural gas transportation systems: A
  state-of-the-art review.
\newblock {\em Applied Energy}, 147:536--555, 2015.

\bibitem{robinius2019}
Martin Robinius, Lars Schewe, Martin Schmidt, Detlef Stolten, Johannes
  Th\"urauf, and Lara Welder.
\newblock Robust optimal discrete arc sizing for tree-shaped potential
  networks.
\newblock {\em Comput Optim Appl}, (73):791--819, 2019.

\bibitem{rose2016}
Daniel Rose, Martin Schmidt, Marc~C. Steinbach, and Bernhard~M. Willert.
\newblock Computational optimization of gas compressor stations: Minlp models
  versus continuous reformulations.
\newblock {\em Math Meth Oper Res}, 83:409--444, 2016.

\bibitem{sahinidis:baron:22.9.1}
N.~V. Sahinidis.
\newblock {\em BARON 22.9.1: Global Optimization of Mixed-Integer Nonlinear
  Programs, {\em User's Manual}}, 2022.

\bibitem{schmidt2015}
Martin Schmidt, Denis Assmann, Robert Burlacu, Jesco Humpola, Imke Joormann,
  Nikolaos Kanelakis, Thorsten Koch, Djamal Oucherif, Marc~E. Pfetsch, Lars
  Schewe, Robert Schewarz, and Mathias Sirvent.
\newblock Gaslib - a library of gas network instances.
\newblock {\em data}, 2(4):40, 2015.

\bibitem{schmidt2016}
Martin Schmidt, Marc~C. Steinbach, and Bernhard~M. Willert.
\newblock High detail stationary optimization models for gas networks:
  validation and results.
\newblock {\em Optim Eng}, 17:437--472, 2016.

\bibitem{shiono2016}
Naoshi Shiono and Hisatoshi Suzuki.
\newblock Optimal pipe-sizing problem of tree-shaped gas distribution networks.
\newblock {\em European Journal of Operational Research}, 252(2):550--560,
  2016.

\bibitem{asset_2020}
Sean Sullivan and Susan Dlin.
\newblock Year in pipelines: Growth in us natural gas pipeline assets slowed
  again in 2020.
\newblock
  \url{https://www.spglobal.com/marketintelligence/en/news-insights/latest-news-headlines/year-in-pipelines-growth-in-us-natural-gas-pipeline-assets\\-slowed-again-in-2020-66386728},
  2021.
\newblock Accessed: 04-21-2022.

\bibitem{ts:05}
M.~Tawarmalani and N.~V. Sahinidis.
\newblock A polyhedral branch-and-cut approach to global optimization.
\newblock {\em Math. Program.}, 103:225--249, 2005.

\bibitem{dewolf1996}
Daniel~De Wolf and Yves Smeers.
\newblock Optimal dimensioning of pipe networks with application to gas
  transmission networks.
\newblock {\em Operations Research}, 44(4):596--608, 1996.

\bibitem{dewolf2000}
Daniel~De Wolf and Yves Smeers.
\newblock The gas transmission problem solved by an extension of the simplex
  algorithm.
\newblock {\em Management Science}, 46(11):1454--1465, 2000.

\bibitem{zhang1996}
Jianzhong Zhang and Detong Zhu.
\newblock A bilevel programming method for pipe network optimization.
\newblock {\em SIAM J. Optimization}, 6(3):838--857, 1996.

\bibitem{zheng2010}
Qipeng~P. Zheng, Steffen Rebennack, Niko~A. Iliadis, and Panos Pardalos.
\newblock {\em Handbook of Power Systems I}, chapter Optimization Models in the
  Natural Gas Industry.
\newblock Springer Berlin, Heidebergy, 2010.

\end{thebibliography}

\end{document}